\setlist[enumerate,1]{label=(\arabic*)}
\setlist[enumerate,2]{label=(\alph)}
\definecolor{carminered}{rgb}{1.0, 0.0, 0.22}
\definecolor{islamicgreen}{rgb}{0.0, 0.56, 0.0}
\definecolor{blue(ryb)}{rgb}{0.01, 0.28, 1.0}
\definecolor{green(ncs)}{rgb}{0.0, 0.62, 0.42}
\newcommand{\calH}{\mathcal{H}}
\newtheorem{proposition}{Proposition}[section]
\newtheorem{lemma}[proposition]{Lemma}
\newtheorem{remark}[proposition]{Remark}
\newtheorem*{remark*}{Remark}
\newtheorem{theorem}[proposition]{Theorem}
\newtheorem*{lemma*}{Lemma}
\newtheorem{assumption}[proposition]{Assumption}
\let\oldappendices\appendices
\renewcommand{\appendices}{\color{green(ncs)}\oldappendices}
\newcommand{\ignore}[1]{}
\newcommand{\sphere}{\mathcal{S}^2}
\newcommand{\eps}{\varepsilon}
\newcommand{\R}{\mathbb{R}}
\newcommand{\N}{\mathbb{N}}
\DeclareMathOperator{\loc}{loc}
\renewcommand{\sp}{\text{\textup{axs}}}
\numberwithin{equation}{section}
\newcommand{\interior}[1]{%
	{\kern0pt#1}^{\mathrm{o}}%
}
\begin{document}

	\title{Interface behavior for the solutions of a mass conserving free boundary problem modeling cell polarization}
	\author{Anna~Logioti\thanks{Institute of Analysis, Dynamics and Modeling, University of Stuttgart} \quad Barbara~Niethammer\thanks{Institute for Applied Mathematics,
			University of Bonn} \quad  Matthias~R\"oger\thanks{Department of Mathematics,
			TU Dortmund University}  \quad Juan~J.~L.~Vel\'azquez\footnotemark[2]}
	\maketitle
	
	\begin{abstract}
		We consider a parabolic non-local free boundary problem that has been derived as a limit of a bulk-surface reaction-diffusion system which models cell polarization. In  previous papers \cite{LNRV21,LNRV23} we have established well-posedness of this problem and derived conditions on the initial data that imply continuity of the free boundary as $t \to 0$.  
		In this paper we extend the qualitative study of the free boundary by considering axisymmetric data. Under additional monotonicity assumptions on the data we prove global continuity of the free boundary. On the other hand,  if  the initial data violate a "no-fattening" condition we show that the free boundary can oscillate as $t \to 0$.
		\medskip
		
		{\bf Keywords. } non-local free boundary problem, obstacle problem, continuity of the free boundary, oscillations of the free boundary
		
		{\bf MSC Classification. } 35R35, 35R37, 35R70, 35Q92
	\end{abstract}
	
	\tableofcontents
	
	\section{Introduction}
	
	\subsection{Background} 
	
	In this paper we discuss qualitative properties of a nonlocal free boundary problem that has been derived from a bulk-surface reaction diffusion system \cite{NRV20,LNRV23} as a model for cell polarization. 
	
	In the following let $T>0$ be an arbitrary time and  $\Gamma \subset \R^3$ be a smooth compact surface without boundary that represents the membrane of a cell. The nonnegative function $u \colon \Gamma \times [0,T) \to [0,\infty)$ denotes the density of a certain protein on $\Gamma$ while $g \colon \Gamma \times [0,T) \to (0,1]$ is a given function representing a chemical signal. Then, if $H \colon \R \to \{0,1\}$ denotes the Heaviside function, and $u_0 \colon \Gamma \to [0,\infty)$ initial data, the free boundary problem can be stated as follows (see \cite[Lemma 2.3]{LNRV23}) 
	
	\begin{align}
		\partial_t u -\Delta u &=-\Big(1-\frac{g}{\lambda(t)} \Big )H(u) &\mbox{ a.e. on }\;\Gamma_T:=\Gamma \times (0,T)\;,
		\label{2eq:ob2}\\
		\lambda(t) &:= \fint_{\{u(\cdot,t)>0\}}g\,dS\;, & \mbox{ for a.a. } t \in (0,T)\,, \label{1eq:ob4}\\
		g &\leq\lambda(t)  &\mbox{ a.e. in  }\{u(\cdot,t)=0\}\;,	\label{2eq:ob3}\\
		u &\geq 0  &\mbox{ a.e. on }\;\Gamma \times (0,T)\;\label{2eq:ob1}\\
		u(\cdot,0) & = u_0 & \mbox{ a.e. on } \Gamma \;.  \label{1eq:ob5}
	\end{align}
	where $\Delta_\Gamma$ denotes the Laplace-Beltrami operator on $\Gamma$.
	The function $\lambda$ in \eqref{1eq:ob4} can be understood as a Lagrange multiplier that guarantees  mass conservation, i.e.
	\begin{equation}
		\int_\Gamma u(\cdot,t)\,dS = \int_{\Gamma} u_0\,dS \qquad \mbox{ for all } t\in [0,T]\,.  \label{1eq:massconservation}
	\end{equation}
	One key property of the parabolic obstacle-type problem \eqref{2eq:ob2}-\eqref{1eq:ob5} is the particular nonlocality in form of a dependence on the {support} of the solution.

	For the system \eqref{2eq:ob2}-\eqref{1eq:ob5} we say that we have a polarized state if both, the zero set of  $u(\cdot,t)$ and its complement on $\Gamma$ have nonzero measure. In our previous work \cite{NRV20}  we proved existence and uniqueness of steady states for a  given mass and in addition we characterized the critical mass below which polarization occurs. Well-posedness for the full parabolic problem as well
	as global stability of steady states has been established in \cite{LNRV21}. 
	
	The purpose of this paper is to continue a qualitative study of the parabolic free boundary problem that we have started in \cite{LNRV23}. More precisely, in \cite{LNRV23} we have derived the two following conditions on the initial data which ensure that the positivity set $\{u(\cdot,t)>0\}$ changes continuously as $t\to 0^+$.
	
	First, we require that 
	\begin{equation}\label{eq:nondeg1}
		g-\lambda(0)\leq -\theta<0  \qquad\mbox{ in } \{u_0=0\}
	\end{equation}
	for some fixed $\theta>0$, and, second, we assume  a `non-fattening' of the boundary of the support of the initial data, i.e.
	\begin{equation}\label{eq:nondeg2}
		\calH^2\big(\partial \{u_0>0\}\big) =0\;.
	\end{equation}
	If \eqref{eq:nondeg2} holds we show in \cite{LNRV23} that condition \eqref{eq:nondeg1} is necessary and sufficient to obtain the continuity of solutions at $t=0$ from the right.

	In general, however, in \cite{LNRV23} it remained open whether \eqref{eq:nondeg2} is necessary for the solution to be right-continuous at $t=0$ and whether the solution remains continuous for positive times.
	
	In this paper, we first prove in Section \ref{sec:continuity}  a global continuity result for the support of the solution $u(\cdot,t)$ under additional symmetry and monotonicity assumptions on the data. Second, we prove a result that indicates that  if  condition \eqref{eq:nondeg2} fails, then the function $\lambda$ is not necessarily right-continuous at $t=0$ even if \eqref{eq:nondeg1} holds.
	More precisely, we provide an example of initial data $u_0$ for which \eqref{eq:nondeg2} is not valid and such that the support of $u(\cdot,t)$ and $\lambda(t)$ behave oscillatory with $t\downarrow 0$. We prove this result rigorously for the classical parabolic obstacle problem in Section \ref{sec:obsc-obs}, which is of interest in its own, and then  extend this result to a slightly simplified nonlocal free boundary problem in Section \ref{sec:nonlocal-obs-R}.
	
	\medskip
	In the rest of this paper we restrict ourselves to the specific case of spherical geometry, $\Gamma=\mathcal S^2$ and to axisymmetric data and axisymmetric solutions.
	Therefore the  problem is reduced to a one-dimensional spatial dependence.
	First, we collect some results from previous works that will play a crucial role in the current analysis.
	In Section \ref{sec:continuity} we prove the global in time continuity of the positivity set $\{u(\cdot,t)>0\}$.
	
	The following sections provide examples of an oscillatory behavior of solutions if the second non-degeneracy condition is violated.
	Section \ref{sec:obsc-obs} considers first the classical parabolic obstacle problem on the real line.
	Finally, in Section \ref{sec:nonlocal-obs-R} we present a corresponding oscillation result for a nonlocal analogue of the system \eqref{2eq:ob2}-\eqref{1eq:ob5} on the real line.

	\subsection{Preliminaries}
	\label{sec:prelim}
	We have established in \cite{LNRV21} that problem \eqref{2eq:ob2}-\eqref{1eq:ob5} admits a unique nonnegative global solution.
	More precisely, we prove that for any $T>0$ and any nonnegative $u_0 \in L^2(\Gamma)$, there exists a unique solution $u \in L^2(0,T;H^1(\Gamma)) \cap H^1(0,T;H^1(\Gamma)^*)$ and further we show that $u\in L^p\big(\delta,T;W^{2,p}(\Gamma)\big)\cap W^{1,p}\big(\delta,T;L^p(\Gamma)\big)$ for any $\delta>0,\,1\leq p<\infty$. 
	By classical embedding arguments it follows that $u\in C^{1+\beta,\frac{1+\beta}{2}}(\Gamma\times [\delta,T])$ for all $0<\beta<1$.
	
	In the case that $u_0\in H^2(\Gamma)$ we even have $u\in L^p\big(0,T;W^{2,p}(\Gamma)\big)\cap W^{1,p}\big(0,T;L^p(\Gamma)\big)$ for any $1\leq p<\infty$, and $u\in C^{1+\beta,\frac{1+\beta}{2}}(\Gamma\times [0,T])$ for all $0<\beta<1$, see for example \cite[Lemma II.3.3]{LaSU68}.
	
	Finally we remark, that by the uniform convergence to a unique stationary state and the estimates provided in \cite{LNRV21} we even have that $u$ is uniformly bounded in $\Gamma\times [0,\infty)$.
	

	From now on we will restrict ourselves to the spherical case $\Gamma=\sphere$ and to axisymmetry with respect to the first coordinate axis. 
	
	\begin{remark}[Axisymmetric data]
		\label{2rem:axis}
		We consider functions $U$ on the sphere $\sphere\subset\R^3$ given by a function $u$ on $[-1,1]$ by
		\begin{equation*}
			U(x_1,x_2,x_3) = u_{\sp}(x_1,x_2,x_3) := u(x_1)\,.
		\end{equation*}
		For functions that depend on a scalar space variable we denote the derivative just by a prime, in particular $u'(x_1)=\frac{d}{dx_1}u$ et cetera.
		
		The Laplace-Beltrami operator for axisymmetric functions $U$ as above can then be written as
		\begin{equation}\label{LB-axi}
			\Delta_{\sphere} U(x)=\big((1-x_1^2) u'(x_1)\big)'\,.
		\end{equation}
		Moreover, $u\in L^1(-1,1)$ if and only if $U\in L^1(\sphere)$, and it holds
		\begin{equation}\label{mass-axi}
			\int_{\sphere} u\,dS = 2 \pi \int_{-1}^1 u(r)\,dr\,.
		\end{equation}
		
		We denote by $C^1_{\sp}([-1,1])$ and $W^{k,p}_{\sp}(-1,1)$ the space of functions $u$ such that $u_{\sp}\in C^1(\Gamma)$ and $u\in W^{k,p}(\sphere)$, respectively.
		We remark that $u\in C^1(\sphere)$ if and only if $\tilde u\in C^1(-1,1)$ with 
		\begin{equation}
			\lim_{r\downarrow 0}r u'(\pm \sqrt{1-r^2})=0\,,
			\label{2eq:D1-cond}
		\end{equation}
		which in particular reflects that $\nabla u(0,0,\pm 1)=0$ holds, 
		and that $u\in W^{1,p}_{\sp}(-1,1)$ if and only if $u\in L^q(-1,1)\cap W^{1,q}_{\loc}(-1,1)$ with $x\mapsto \sqrt{1-x^2} u'(x)\in L^q((-1,1))$.
		
		The parabolic Hölder and Sobolev spaces $C^{2k+\alpha,k+\frac{\alpha}{2}}_{\sp}([-1,1]\times [0,T])$ and $W^{2,1}_{p,\sp}((-1,1)\times (0,T))$, respectively, are defined analogously.
	\end{remark}
	
	\begin{lemma}
		Consider $\Gamma=\sphere$, $1\leq p<\infty$ and an axisymmetric function $U\in W^{2,1}_p(\sphere\times (0,T))$ represented by some $u\in W^{2,1}_{p,\sp}((-1,1)\times (0,T))$ as $U=u_{\sp}$.
		
		Then $U$ solves \eqref{2eq:ob2} (with $u$ replaced by $U$) if and only if $u$ solves 
		\begin{equation}
			\partial_t u-\big((1-x^2)u'\big)'=-\Big(1-\frac{g}{\lambda(t)} \Big ) H(u)\; \qquad \text{in }\; (-1,1) \times (0,T]\,.
			\label{3eq:ob2}
		\end{equation}
		
		Moreover, for given axisymmetric data $U_0=u_{0,\sp}\in H^2(\sphere)$ and $G=g_{\sp}\in C^0(\sphere)$ there is a one-to-one correspondence between solutions $U$ of \eqref{2eq:ob2}-\eqref{1eq:ob5} (with $u,g,u_0$ replaced by $U,G,U_0$) and solution $u$ to the nonlocal initial boundary problem given by \eqref{3eq:ob2} and
		\begin{align}
			\lambda(t) &:= \fint_{\{u(\cdot,t)>0\}}g(r)\,dr\;, & \text{ for a.a. } t \in (0,T)\,, \label{3eq:ob4}\\
			g &\leq\lambda(t)  &\text{ a.e. in  }\{u(\cdot,t)=0\}\;,	\label{3eq:ob3}\\
			u &\geq 0  &\text{ a.e. on }\;(-1,1) \times (0,T)\;\label{3eq:ob1}\\
			u(\cdot,0) & = u_0 & \text{ a.e. on } (-1,1)\;.  \label{3eq:ob5}
		\end{align}
	\end{lemma}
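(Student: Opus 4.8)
The plan is to transfer every ingredient of the system between the sphere $\sphere$ and the interval $(-1,1)$ using the axisymmetric reduction of Remark~\ref{2rem:axis}, and then to combine this with the uniqueness theory of \cite{LNRV21} and the rotational equivariance of the problem in order to upgrade the statement from axisymmetric solutions to \emph{all} solutions with axisymmetric data.

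First I would record the elementary identities for $U=u_{\sp}$: one has $\partial_t U=(\partial_t u)_{\sp}$, while by \eqref{LB-axi} the function $\Delta_{\sphere}U$ is the axisymmetric function with profile $\big((1-x^2)u'\big)'$ (valid on $W^{2,1}_{p,\sp}$ by the definition of the weighted spaces), and $H(U)=\big(H(u)\big)_{\sp}$, $G=g_{\sp}$. The key measure-theoretic fact is that the push-forward of the surface measure $dS$ under $x\mapsto x_1$ equals $2\pi\,dr$ (Archimedes' hat-box identity, which also yields \eqref{mass-axi}); in particular $dS$ and $dr$ have the same null sets under this correspondence, so a measurable axisymmetric function vanishes $dS\otimes dt$-a.e.\ on $\Gamma_T$ if and only if its profile vanishes $dr\otimes dt$-a.e.\ on $(-1,1)\times(0,T)$, and $\{U(\cdot,t)>0\}$ corresponds to $\{u(\cdot,t)>0\}$ for a.e.\ $t$. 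Plugging the identities into \eqref{2eq:ob2} and using this dictionary gives the equivalence with \eqref{3eq:ob2}; likewise \eqref{2eq:ob3}$\Leftrightarrow$\eqref{3eq:ob3}, \eqref{2eq:ob1}$\Leftrightarrow$\eqref{3eq:ob1} and \eqref{1eq:ob5}$\Leftrightarrow$\eqref{3eq:ob5} follow.

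For the nonlocal coupling, the band over $\{u(\cdot,t)>0\}$ has $dS$-measure $2\pi$ times the $dr$-measure of $\{u(\cdot,t)>0\}$, and $\int G\,dS$ over that band equals $2\pi\int g\,dr$; the common factor $2\pi$ cancels, so
\[
\fint_{\{U(\cdot,t)>0\}}G\,dS=\fint_{\{u(\cdot,t)>0\}}g(r)\,dr,
\]
i.e.\ the multiplier of \eqref{1eq:ob4} computed on $\sphere$ coincides with the one in \eqref{3eq:ob4}. Together with the previous step this shows that if $U=u_{\sp}$, then $U$ solves \eqref{2eq:ob2}--\eqref{1eq:ob5} if and only if $u$ solves \eqref{3eq:ob2} together with \eqref{3eq:ob4}--\eqref{3eq:ob5}; the regularity match $U\in W^{2,1}_p(\sphere\times(0,T))\Leftrightarrow u\in W^{2,1}_{p,\sp}((-1,1)\times(0,T))$ and the $H^2$ condition on the data are part of the definition of the weighted spaces in Remark~\ref{2rem:axis}.

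It remains to check that $u\mapsto u_{\sp}$ is \emph{onto} the solution set of \eqref{2eq:ob2}--\eqref{1eq:ob5} with axisymmetric data, i.e.\ that every such solution is axisymmetric; this is the only genuine argument. The full problem is equivariant under the group of rotations of $\sphere$ about the first coordinate axis: rotations preserve $dS$ and commute with $\Delta_{\sphere}$, and for such a rotation $\rho$ one has $\{U\circ\rho>0\}=\rho^{-1}\{U>0\}$, hence $\fint_{\rho^{-1}\{U>0\}}(G\circ\rho)\,dS=\fint_{\{U>0\}}G\,dS$; therefore, if $U$ solves the system with data $(G,U_0)$, then $U\circ\rho$ solves it with data $(G\circ\rho,U_0\circ\rho)$. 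When $G$ and $U_0$ are axisymmetric, $G\circ\rho=G$ and $U_0\circ\rho=U_0$, so $U\circ\rho$ and $U$ solve the same problem; by the uniqueness statement recalled in Section~\ref{sec:prelim} we conclude $U\circ\rho=U$ for all such $\rho$, hence $U$ is axisymmetric and $U=u_{\sp}$ for the corresponding profile $u$, to which the first three paragraphs apply. Since $u\mapsto u_{\sp}$ is obviously injective, the correspondence is one-to-one. I expect the main obstacle to be organizational rather than deep: pinning down the ``a.e.\ on $\sphere$ versus a.e.\ on $(-1,1)$'' dictionary cleanly in the Sobolev ($W^{2,1}_p$) setting, and checking that the uniqueness result of \cite{LNRV21} is invoked at the regularity level of the solutions considered here.
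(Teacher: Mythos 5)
Your proposal is correct and follows essentially the same route as the paper, whose proof consists of a single sentence citing Remark~\ref{2rem:axis} and the uniqueness result of \cite{LNRV21}. You have simply spelled out the details left implicit there: the transfer of $\Delta_{\sphere}$, $dS$ (via the hat-box identity) and the multiplier $\lambda$ under the axisymmetric reduction, and the rotational-equivariance plus uniqueness argument showing that any solution with axisymmetric data is itself axisymmetric, so that $u\mapsto u_{\sp}$ is onto.
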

	
	\begin{proof}
		This follows by the formulas stated in Remark \ref{2rem:axis} and by the existence and uniqueness result of solutions to \eqref{2eq:ob2}-\eqref{1eq:ob5} proved in \cite{LNRV21}.
	\end{proof}

	\section{Global continuity result for axisymmetric solutions} 
	\label{sec:continuity}
	
	From now on we will only deal with the axisymmetric case.
	
	Our aim in this section is to provide global continuity results under certain  assumptions on the initial data $u_0$ and the external stimulus $g$.
	\begin{assumption}\label{ass:main}
		We assume that 
		\begin{equation}\label{initialdata}
			u_0\in C^2_{\sp}([-1,1]) \quad\text{ with }
			\; u_0 \geq 0 \quad\text{ and }
			\quad |\{u_0>0\}|>0
		\end{equation}
		and for some $\gamma \in [-1,1)$
		\begin{equation}\label{data1d}
			\{ u_0>0\}=(\gamma,1] \quad\text{ and }\quad u_0'> 0 \quad\text{ a.e. in  }(\gamma,1)\;.
		\end{equation}
		Furthermore, we assume that 
		\begin{equation}\label{gassumptions}
			g \in C^2_{\sp}([-1,1])  \qquad \mbox{ with } \quad 0< g_0\leq g \leq g_1 <1 \;\mbox{ on } \mathcal S^2\,\;
		\end{equation}
		for some constants $0<g_0<g_1<1$ and that for some $\kappa>0$ we have 
		\begin{equation}\label{g1d}
			g' \geq \kappa>0 \quad\text{ in  }[-1,1]\,.
		\end{equation}
	\end{assumption}
	
	For the following we define the boundary of the positivity set of $u$ via
	\begin{equation}\label{pdef}
		p(t):=\inf \{ x\,|\, u(x,t)>0\}\,.
	\end{equation}
	Indeed, we will see in Lemma \ref{L.1} that if $u_0$ is increasing, then so is $u(\cdot,t)$ for any $t>0$ and $[p(t),1]$ is indeed  the support of $u(\cdot,t)$.

	\begin{theorem}\label{main_thm}
		Suppose that Assumption \ref{ass:main} holds. 
		Moreover, let $u\in W^{2,1}_{p,\sp}([-1,1]\times [0,\infty)$ be a solution to \eqref{3eq:ob2}-\eqref{3eq:ob5}. 
		Then $p\colon [0,\infty) \to [-1,1]$ is continuous.
	\end{theorem}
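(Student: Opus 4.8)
My plan is to establish separately that $p$ is upper and lower semicontinuous at every $t_0\in[0,\infty)$. Throughout I use Lemma~\ref{L.1}: under Assumption~\ref{ass:main} the profile $u(\cdot,t)$ is nondecreasing in $x$ for every $t\ge 0$, so that $\{u(\cdot,t)>0\}=(p(t),1]$, $[-1,p(t)]$ is the coincidence set, and, by continuity, $\sup_{x\in(p(t),p(t)+s)}u(x,t)=u(p(t)+s,t)$. Combining the uniform bound $\sup_{[-1,1]\times[0,\infty)}u=:C_0<\infty$ from Section~\ref{sec:prelim} with mass conservation $\int_{-1}^1 u(\cdot,t)=\int_{-1}^1 u_0=:M>0$ and monotonicity gives $M\le C_0(1-p(t))$, hence $p(t)\le p^\ast:=1-M/C_0<1$ for all $t$. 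Since $0<g_0\le g\le g_1<1$ we have $g_0\le\lambda(t)\le g_1$, and from $g'\ge\kappa$,
\[
\lambda(t)-g(p(t))=\fint_{(p(t),1)}\bigl(g(r)-g(p(t))\bigr)\,dr\ \ge\ \tfrac{\kappa}{2}(1-p(t))\ \ge\ \tfrac{\kappa}{2}(1-p^\ast)>0 .
\]
Hence there are $\rho_0\in(0,\tfrac{1-p^\ast}{2})$ and $c_0>0$, depending only on the data, with
\begin{equation}\label{eq:strictsign}
\frac{g(x)-\lambda(t)}{\lambda(t)}\ \le\ -c_0<0\qquad\text{for all }t\text{ and }x\in(p(t),p(t)+\rho_0),
\end{equation}
while $1-x^2\ge\mu_0:=1-\bigl(\tfrac{1+p^\ast}{2}\bigr)^2>0$ on that range, so the operator is uniformly elliptic near the free boundary. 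Upper semicontinuity is now immediate: the data being smooth, $u$ is continuous on $[-1,1]\times[0,\infty)$ (Section~\ref{sec:prelim}); by monotonicity $u(x_0,t_0)>0$ for every $x_0\in(p(t_0),1)$, hence $u(x_0,\cdot)>0$ near $t_0$, so $p(t)\le x_0$ there, and $\limsup_{t\to t_0}p(t)\le p(t_0)$ on letting $x_0\downarrow p(t_0)$.

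The core of the argument is lower semicontinuity, which I reduce to a uniform non-degeneracy estimate at the free boundary: there is $c_1>0$, depending only on the data, such that
\begin{equation}\label{eq:nondeg-goal}
u(p(t)+s,t)\ \ge\ c_1 s^2\qquad\text{for all }t\text{ and all }s\in(0,\rho_0].
\end{equation}
Granting \eqref{eq:nondeg-goal}, fix $t_0\ge0$ and any $x^\ast\in(-1,p(t_0))$. Then $u(x^\ast,t_0)=0$, so $u(x^\ast,t)\to0$ as $t\to t_0$; pick $t$ so close to $t_0$ that $u(x^\ast,t)<c_1\rho_0^2$. This rules out $p(t)\le x^\ast-\rho_0$, for otherwise monotonicity and \eqref{eq:nondeg-goal} at scale $\rho_0$ would give $u(x^\ast,t)\ge u(p(t)+\rho_0,t)\ge c_1\rho_0^2$. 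If $x^\ast-\rho_0<p(t)<x^\ast$, then $x^\ast\in(p(t),p(t)+\rho_0]$ and \eqref{eq:nondeg-goal} at scale $x^\ast-p(t)$ gives $p(t)\ge x^\ast-\sqrt{u(x^\ast,t)/c_1}$; if $p(t)\ge x^\ast$ this bound holds trivially. In all cases $p(t)\ge x^\ast-\sqrt{u(x^\ast,t)/c_1}$, whence $\liminf_{t\to t_0}p(t)\ge x^\ast-\sqrt{u(x^\ast,t_0)/c_1}=x^\ast$; letting $x^\ast\uparrow p(t_0)$ yields $\liminf_{t\to t_0}p(t)\ge p(t_0)$. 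Together with upper semicontinuity, $p$ is continuous. (This last step is, in effect, a Hausdorff convergence of the coincidence sets $[-1,p(t)]\to[-1,p(t_0)]$, in the spirit of \cite{Caff81}.)

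It remains to prove the non-degeneracy \eqref{eq:nondeg-goal}, which I expect to be the main obstacle. For $t>0$ the solution is smooth and, on $\{u>0\}$ near the free boundary, $\partial_t u-\bigl((1-x^2)u'\bigr)'=(g-\lambda)/\lambda\le-c_0<0$ by \eqref{eq:strictsign}; one then obtains \eqref{eq:nondeg-goal} by a barrier/comparison argument of Brezis--Friedman / Evans--Knerr type (cf.\ \cite{BF76,EK79}, or Caffarelli's elliptic argument for the obstacle problem): at a free-boundary point $(p(t),t)$ one compares $u$ on a backward parabolic neighborhood with a shifted paraboloid $x\mapsto\tfrac{c_1}{2}(x-x_1)^2$, which \eqref{eq:strictsign} and the ellipticity $1-x^2\ge\mu_0$ make a subsolution of the equation in $\{u>0\}$, so that the maximum principle forces $u\ge c_1 s^2$ somewhere within distance $s$ of $p(t)$, and monotonicity carries this to the point $p(t)+s$. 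Two points need care. First, $c_1$ must not degenerate as $t$ ranges over $[0,\infty)$, which is possible precisely because $c_0,\rho_0,\mu_0$ in \eqref{eq:strictsign} are uniform. Second, and more delicate, the comparison region swept by the barrier must remain inside $\{x<p(s)+\rho_0\}$ for the relevant nearby times $s$, which requires an a priori bound preventing $p$ from dipping far down into $\{x<p(t)\}$ near $t$ — i.e.\ exactly the continuity being proved. I would break this circularity by a continuation argument in $t$: right-continuity of $p$ at $t=0$ holds by \cite{LNRV23}, since Assumption~\ref{ass:main} implies condition \eqref{eq:nondeg1} (with $\theta=\lambda(0)-g(\gamma)>0$, by the gap estimate at $t=0$) and the non-fattening condition \eqref{eq:nondeg2} ($\partial\{u_0>0\}$ being a single latitude circle, of zero $\calH^2$-measure); then, on any interval $[0,\tau]$ on which $p$ is already known to be continuous, \eqref{eq:strictsign} holds on a genuine space-time neighborhood of the free boundary, the barrier argument yields \eqref{eq:nondeg-goal} up to $t=\tau$, and the semicontinuity argument above — together with the (unconditional) Brezis--Friedman estimate excluding instantaneous expansion of the support — propagates continuity of $p$ past $\tau$; maximality then gives continuity on all of $[0,\infty)$.
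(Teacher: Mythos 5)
Your reduction of the problem to upper and lower semicontinuity of $p$, and the clean argument for upper semicontinuity via continuity of $u$ (if $u(x_0,t_0)>0$ then $u(x_0,\cdot)>0$ nearby, so $p\leq x_0$ nearby), are both fine. The reduction of lower semicontinuity to the non-degeneracy estimate \eqref{eq:nondeg-goal} is also a correct logical step. But the pivotal estimate \eqref{eq:nondeg-goal} is neither as you stated nor proved, and this is where the argument genuinely fails.

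First, \eqref{eq:nondeg-goal} as formulated (``for all $t$ and $s\in(0,\rho_0]$, $u(p(t)+s,t)\ge c_1 s^2$'') is simply false at $t=0$ under Assumption~\ref{ass:main}: we only assume $u_0'>0$ a.e.\ in $(\gamma,1)$, so $u_0$ may vanish to arbitrarily high order at $\gamma$ (take $u_0\sim(x-\gamma)_+^4$ near $\gamma$), in which case $u(p(0)+s,0)=u_0(\gamma+s)\ll s^2$. Since your lower-semicontinuity argument at $t_0=0$ needs \eqref{eq:nondeg-goal} with a constant $c_1$ not degenerating as $t\downarrow 0$, this is not a cosmetic issue. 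The paper accommodates exactly this flatness: the lower bound \eqref{subclaim} is of the form $u(x,s)\ge W(x-p(s))$ with $W$ merely positive on $(0,\delta_0)$, not quadratic, and the proof of \eqref{subclaim} splits into $s>\omega(\delta_0)$ (where a parabolic subsolution is built up from zero) and $s\le\omega(\delta_0)$ (where the initial derivative $u_0'$ is fed into the subsolution), precisely because for small $s$ the only available information about the growth rate comes from $u_0$.

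Second, and more seriously, the barrier argument you sketch does not deliver a pointwise bound at time $t$. A CPS-type comparison on a backward cylinder $B_s(p(t))\times(t-s^2,t)$ yields $\sup u\ge c s^2$ over the \emph{parabolic boundary} of that cylinder intersected with $\{u>0\}$, so ``monotonicity carries this to $p(t)+s$'' only gives $u(p(t)+s,t')\ge c s^2$ for some $t'\in[t-s^2,t]$, not $t'=t$. Translating this to time $t$ costs a term of the same order $s^2$ (since $|\partial_t u|$ is only bounded), so the argument does not close. Finally, the continuation argument you invoke to break the acknowledged circularity does not actually do so: if $\tau^*$ is the supremum of times up to which $p$ is continuous, then extending past $\tau^*$ requires lower semicontinuity of $p$ from the \emph{left} at $\tau^*$, which via your scheme needs \eqref{eq:nondeg-goal} at times $t<\tau^*$ near $\tau^*$ with a uniform constant; but the barrier argument at such $t$ looks backward over $(t-\rho_0^2,t)$, where the free boundary's behavior is controlled only by the (as yet unquantified) modulus of continuity on $[0,\tau^*)$. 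The ``unconditional Brezis--Friedman'' input you cite gives lower semicontinuity from the right (the analogue of the paper's Step~1, Lemma~\ref{L.degeneracy}) — the wrong direction for this extension. In the paper the circularity is avoided structurally: Step~1 is an unconditional upper-barrier bound (small $u$ forced to zero), and Step~2 constructs a subsolution for $v=u'$, whose positive source $g'/\lambda\ge M>0$ makes $v$ grow over the time window $[s-\omega(\delta),s]$ provided by Step~1; the estimate is produced exactly at the target time $s$, with no backward-time translation, and a quantitative and \emph{uniform} (in $\tilde t$) right-modulus $\omega(\delta)$ then yields full continuity. I would either adopt this Step~1/Step~2 decomposition, or replace \eqref{eq:nondeg-goal} by a non-quadratic statement of the form \eqref{subclaim} and prove it via the differentiated equation as in the paper.
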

	
	The proof will be given at the end of this section.
	We start with some auxiliary results.
	
	\subsection{Monotonicity and non-degeneracy}
	
	First we prove that the monotonicity property assumed for the initial data propagates to positive times.
	
	\begin{lemma}\label{L.1}
		Under the assumptions of Theorem \ref{main_thm}  
		$u'(\cdot,t) \geq 0$ holds for all $t\geq 0$.
	\end{lemma}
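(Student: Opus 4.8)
The plan is to prove propagation of monotonicity by differentiating the equation in space and applying a parabolic maximum principle to $v := u'$. Formally, differentiating \eqref{3eq:ob2} on the open set $\{u>0\}$ where $H(u)=1$ gives
\begin{equation*}
\partial_t v - \big((1-x^2)v'\big)' + \big(2x\,v\big)' \;=\; \frac{g'}{\lambda(t)} \qquad \text{in } \{u>0\}\,,
\end{equation*}
more precisely $\partial_t v = (1-x^2)v'' - 4x v' - 2v + g'/\lambda(t)$, and since $g'\geq\kappa>0$ and $\lambda(t)>0$ the right-hand side is strictly positive; thus $v$ is a supersolution of a linear parabolic operator on the (time-dependent) positivity set. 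At the initial time $v(\cdot,0)=u_0'\geq 0$ by \eqref{data1d}. On the left free boundary $x=p(t)$ we have $u=0$ with $u\geq 0$ nearby, so $v=u'\geq 0$ there (one-sided derivative), and at the right endpoint $x=1$ the boundary is degenerate (the coefficient $1-x^2$ vanishes) so no boundary data is needed, or one uses the $C^1_\sp$ condition \eqref{2eq:D1-cond}. Hence the minimum principle forces $v\geq 0$ on $\{u>0\}$, and trivially $v=u'=0$ on the interior of $\{u=0\}$, giving $u'(\cdot,t)\geq 0$ everywhere.

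The main technical obstacle is that this argument must be run on the a-priori unknown, possibly irregular, time-dependent domain $\{u(\cdot,t)>0\}$, and that $u'$ need not be smooth up to the free boundary. The clean way around this is an approximation/penalization argument: either (i) approximate the obstacle problem \eqref{3eq:ob2}–\eqref{3eq:ob5} by a semilinear penalized problem $\partial_t u_\eps - ((1-x^2)u_\eps')' = -(1 - g/\lambda_\eps(t))\beta_\eps(u_\eps)$ with a smooth monotone $\beta_\eps\to H$, for which solutions are classical, prove $u_\eps'\geq 0$ by the maximum principle applied on all of $(-1,1)$ (the term $(g'/\lambda_\eps)\beta_\eps(u_\eps) + (g/\lambda_\eps)\beta_\eps'(u_\eps)u_\eps'$ — note $\beta_\eps'\geq 0$ — is handled since at an interior negative minimum of $u_\eps'$ the bad term vanishes), and pass to the limit using the convergence $u_\eps\to u$ established in \cite{LNRV21}; or (ii) work directly with the $W^{2,1}_{p,\sp}$ regularity of $u$ recalled in Section~\ref{sec:prelim}, test the differentiated equation against $(u')^- = \min(u',0)$ on the set $\{u>0\}$, and use that on $\partial\{u>0\}$ one has $u'\geq 0$ in the trace sense while the degenerate endpoint contributes no boundary term, to derive $\frac{d}{dt}\int (u'^-)^2 \leq 0$ (the first-order coefficients produce only controllable terms after integration by parts, and the source $g'/\lambda>0$ has the favorable sign against $(u')^-\leq 0$).

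I would present approach (i), since the penalization is already the natural tool behind the existence theory in \cite{LNRV21} and avoids delicate trace statements on a rough free boundary; the Lagrange multiplier $\lambda_\eps(t) = \fint_{\{u_\eps(\cdot,t)>\eps\}} g$ (or the analogous penalized version) stays bounded between $g_0$ and $g_1$ by \eqref{gassumptions}, uniformly in $\eps$, so all constants in the maximum principle are uniform. The only remaining point is initial regularity: since $u_0 \in C^2_\sp([-1,1])$ by \eqref{initialdata}, the penalized solutions are smooth up to $t=0$ and $u_\eps'(\cdot,0)\to u_0'\geq 0$, so no extra smoothing in time is needed. Passing $\eps\downarrow 0$ then yields $u'(\cdot,t)\geq 0$ for a.e., hence (by the continuity of $u(\cdot,t)$ in $C^1$ for $t>0$ recalled in the Preliminaries, and in $C^1$ up to $t=0$ under \eqref{initialdata}) for all $t\geq 0$, which is the assertion of Lemma~\ref{L.1}.
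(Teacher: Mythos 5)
Your route is genuinely different from the paper's. The paper does not run a maximum principle on $v=u'$ at all; it carries out a one-line weighted $L^2$/Gr\"onwall estimate, multiplying the equation \eqref{3eq:ob2} by $\big((1-x^2)u'\big)'$ and integrating over the level set of $u'$. After integrating by parts, the term coming from $-(1-g/\lambda)H(u)$ produces $-\frac{1}{\lambda}\int g'(1-x^2)u'$, whose sign is favorable by \eqref{g1d}, and the boundary contributions vanish automatically because the weight $(1-x^2)u'$ is zero both where $u'$ changes sign and at $x=\pm 1$. This gives a monotone quantity which starts at zero and hence stays zero, without ever touching the free boundary or the degenerate poles explicitly. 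The ``option (ii)'' you mention only in passing (test the differentiated equation against $(u')^-$ on $\{u>0\}$) is essentially what the paper does, modulo the choice of weight; had you presented that route, your argument and the paper's would be essentially the same.

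Your preferred route (i), penalization plus maximum principle, is a legitimate alternative, but as written it contains a concrete error. Differentiating the penalized right-hand side gives
$\frac{g'}{\lambda_\eps}\beta_\eps(u_\eps)-\Big(1-\tfrac{g}{\lambda_\eps}\Big)\beta_\eps'(u_\eps)u_\eps'$,
not $\frac{g'}{\lambda_\eps}\beta_\eps(u_\eps)+\frac{g}{\lambda_\eps}\beta_\eps'(u_\eps)u_\eps'$, and the claim that ``at an interior negative minimum of $u_\eps'$ the bad term vanishes'' is false: at such a point $u_\eps'<0$ and $\beta_\eps'(u_\eps)$ depends on $u_\eps$, not on $u_\eps'$, so $\beta_\eps'(u_\eps)u_\eps'$ has no reason to vanish. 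The fix is the standard one: absorb the $v$-proportional term into a zero-order coefficient $c_\eps=2+\big(1-\tfrac{g}{\lambda_\eps}\big)\beta_\eps'(u_\eps)$ on the left, note that $c_\eps$ is bounded for each fixed $\eps$ (though not uniformly in $\eps$, which is harmless since you only need the sign conclusion before passing to the limit), and apply the minimum principle after the exponential transformation $w=e^{-Ct}v$. You would also need to handle the pole $x=1$ (where $1-x^2$ degenerates and a negative minimum of $v$ cannot be excluded by the interior argument alone), and your appeal to \eqref{2eq:D1-cond} is not quite enough on its own. So the idea is sound and genuinely different, but the details are not yet a proof; the paper's weighted energy identity avoids all of these boundary and penalization subtleties at once.
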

	
	\begin{proof}
		We multiply \eqref{3eq:ob2} by $\big((1-x^2)u'(x,t)\big)'$ and integrate over $\{u'(\cdot,t)>0\}\subset \{u(\cdot,t)>0\}$.
		We use
		\begin{equation*}
			\int_{\{u'(\cdot,t)>0\}}\partial_tu(x,t)\big((1-x^2)u'(x,t)\big)'\,dx
			=-\frac{d}{dt}\int_{\{u'(\cdot,t)>0\}}\frac{1}{2}\big((1-x^2)u'(x,t)\big)^2\,dx\,,
		\end{equation*}
		in a weak sense, and
		\begin{equation*}
			-\int_{\{u'(\cdot,t)>0\}}\Big(1-\frac{g(x)}{\lambda(t)}\Big)\big((1-x^2)u'(x,t)\big)'\,dx
			=-\frac{1}{\lambda(t)}\int_{\{u'(\cdot,t)>0\}}g'(x)(1-x^2)u'(x,t)\,dx \geq 0.
		\end{equation*}
		Therefore we obtain
		\begin{equation*}
			\frac{d}{dt}\int_{\{u'(\cdot,t)>0\}}\frac{1}{2}\big((1-x^2)u'(x,t)\big)^2\,dx \leq 0\,,
		\end{equation*}
		which implies by $u_0'\geq 0$ that $u'(\cdot,t)\geq 0$ almost everywhere in $(-1,1)$ for almost all $t\in (0,\infty)$.
		By continuity of $u'$, see the remarks at the beginning of Section \ref{sec:prelim} the claim follows.
	\end{proof}
	
	Next, we show that the support of $u$ can not be arbitrarily small.
	For that we recall that for any $T>0$ we have a bound $\|u\|_{\infty}:=\|u\|_{L^{\infty}((-1,1)\times [0,T])} \leq C_T$.
	
	\begin{lemma}\label{pless1}
		Under the assumptions of Theorem \ref{main_thm}
		it holds
		\begin{equation}\label{Upper}
			p(t) \leq 1-\frac{m}{2\pi \|u\|_{\infty}} \; \qquad \text{for all }\;t\in [0,T]\,.
		\end{equation}
		Furthermore we have
		\begin{equation}\label{suffC2b}
			g(x) \leq \lambda(t)-\frac{\kappa m }{4\pi \|u\|_{\infty}} \qquad \mbox{ for all } x \in (-1,p(t)) \mbox{ and } t\in [0,T]\,.
		\end{equation}
	\end{lemma}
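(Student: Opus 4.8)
The plan rests on two ingredients established earlier: by Lemma \ref{L.1} the solution $u(\cdot,t)$ is nondecreasing in $x$, so (together with $u(\cdot,t)\ge 0$ and the definition \eqref{pdef}) its positivity set is exactly the interval $(p(t),1]$, of length $1-p(t)$; and mass conservation \eqref{1eq:massconservation}, which in the axisymmetric variables reads $\int_{-1}^1 u(r,t)\,dr = \tfrac{m}{2\pi}$ for all $t$, where $m:=\int_\Gamma u_0\,dS>0$ is the conserved total mass, and $\|u\|_\infty\le C_T$ on $[0,T]$.

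To obtain \eqref{Upper} I would estimate the mass from above using the trivial bound $u(\cdot,t)\le\|u\|_\infty$ on its support:
\begin{equation*}
  \frac{m}{2\pi} \;=\; \int_{p(t)}^1 u(r,t)\,dr \;\le\; \|u\|_\infty\,\big(1-p(t)\big),
\end{equation*}
which rearranges to \eqref{Upper}. (When $p(t)=-1$ both \eqref{Upper} and \eqref{suffC2b} are trivial, respectively vacuous, so one may as well assume $p(t)>-1$.)

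For \eqref{suffC2b} the idea is to use the strict monotonicity $g'\ge\kappa$ to compare the value $g(x)$ on the coincidence interval $(-1,p(t))$ with the average $\lambda(t)=\tfrac{1}{1-p(t)}\int_{p(t)}^1 g(r)\,dr$ over the positivity interval. Since $g$ is increasing, $g(x)\le g(p(t))$ for $x\le p(t)$, while $g(r)\ge g(p(t))+\kappa\,(r-p(t))$ for $r\ge p(t)$; averaging the latter over $(p(t),1)$ gives $\lambda(t)\ge g(p(t))+\tfrac{\kappa}{2}(1-p(t))$. Hence for any $x\in(-1,p(t))$,
\begin{equation*}
  \lambda(t)-g(x)\;\ge\;\lambda(t)-g(p(t))\;\ge\;\frac{\kappa}{2}\,\big(1-p(t)\big)\;\ge\;\frac{\kappa m}{4\pi\|u\|_\infty},
\end{equation*}
the last inequality by \eqref{Upper}. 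This is exactly \eqref{suffC2b}.

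I do not expect a genuine obstacle: the computation is elementary once the positivity set is identified with $(p(t),1]$. The only mild point of care is that these are pointwise-in-$t$ statements required for \emph{every} $t\in[0,T]$; this is fine because $u$ (hence $p$, and the average defining $\lambda$) depends continuously on $t$ on $[\delta,T]$ for every $\delta>0$ by the regularity recalled in Section \ref{sec:prelim}, and up to $t=0$ under the standing assumption $u_0\in C^2_{\sp}([-1,1])$.
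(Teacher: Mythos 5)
Your proof is correct and follows essentially the same route as the paper: mass conservation together with $u\le\|u\|_\infty$ on the support $(p(t),1]$ gives \eqref{Upper}, and the linear lower bound $g(r)\ge g(p(t))+\kappa(r-p(t))$ (the paper phrases this as Taylor's theorem with \eqref{g1d}) averaged over the positivity interval gives $\lambda(t)\ge g(p(t))+\tfrac{\kappa}{2}(1-p(t))$, from which \eqref{suffC2b} follows via \eqref{Upper} and the monotonicity of $g$. Your extra remark that the positivity set is precisely the interval $(p(t),1]$ by Lemma \ref{L.1} makes explicit a point the paper uses implicitly.
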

	\begin{proof}
		We observe that due to \eqref{1eq:massconservation}, \eqref{mass-axi} estimate \eqref{Upper} follows from
		\begin{align*}
			m=2\pi \int_{p(t)}^1 u(x,t)\;dx\leq 2\pi {\|u\|}_{\infty} (1{-}p(t))\;.
		\end{align*}
		By  Taylor's Theorem and \eqref{g1d}, we obtain that
		\begin{align*}
			\lambda(t)&=\frac{1}{1{-}p(t)} \int_{p(t)}^1 g\;dx \geq \frac{1}{1{-}p(t)} \int_{p(t)}^1 g(p(t))+\kappa(x-p(t))\;dx\\ &\geq g(p(t))+\frac{\kappa}{2}(1{-}p(t))
			\geq g(p(t)) + \frac{\kappa m}{4\pi \|u\|_{\infty}}\,.
		\end{align*}
		Due to the monotonicity of $g$ we deduce \eqref{suffC2b}.
	\end{proof}

	Next we prove that $p(t)$ is sufficiently separated from the value $s(t)$ where $g(s(t))=\lambda(t)$.
	We therefore obtain a uniform non-degeneracy property of the solutions, compare assumption (1.9) in \cite{LNRV23} and the discussion in that paper.
	
	\begin{lemma}\label{L.2}
		Assume that \eqref{data1d}, \eqref{g1d} are valid.
		Then, there exists a unique $s(t)\geq p(t)$ with
		\begin{equation}
			g(s(t))=\lambda(t)\; \label{critical}
		\end{equation}
		and it holds
		\begin{equation}\label{interface}
			p(t) \leq s(t)- c_0
			\qquad \mbox{ for all } t\in [0,T]
		\end{equation}
		for some $c_0=c_0(g,m,\|u\|_{\infty})>0$.
	\end{lemma}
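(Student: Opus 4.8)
The plan is to use that, by \eqref{g1d}, $g$ is strictly increasing, hence a continuous increasing bijection from $[-1,1]$ onto $[g(-1),g(1)]$, so that $g(r)=\mu$ has exactly one solution $r\in[-1,1]$ for every $\mu\in[g(-1),g(1)]$; both the existence and the uniqueness of $s(t)$ then reduce to locating $\lambda(t)$. To that end I would first identify the positivity set: by Lemma~\ref{L.1} the function $u(\cdot,t)$ is nondecreasing, and together with the definition \eqref{pdef} of $p(t)$ this forces $\{u(\cdot,t)>0\}=(p(t),1]$ up to a set of measure zero; moreover Lemma~\ref{pless1} guarantees $p(t)\le 1-\tfrac{m}{2\pi\|u\|_{\infty}}<1$, so that $[p(t),1]$ is a nondegenerate interval. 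Consequently $\lambda(t)=\fint_{p(t)}^{1}g(r)\,dr$ is the mean value of the continuous strictly increasing function $g$ over $[p(t),1]$, and hence $g(p(t))<\lambda(t)<g(1)$. This produces a unique $s(t)\in(p(t),1)$ with $g(s(t))=\lambda(t)$, and in particular $s(t)\ge p(t)$.

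For the quantitative separation \eqref{interface} I would reuse the chain of estimates already carried out in the proof of Lemma~\ref{pless1}, which by Taylor's theorem and \eqref{g1d} shows $\lambda(t)\ge g(p(t))+\tfrac{\kappa m}{4\pi\|u\|_{\infty}}$. Since $g(s(t))=\lambda(t)$, this gives $g(s(t))-g(p(t))\ge\tfrac{\kappa m}{4\pi\|u\|_{\infty}}$. As $g\in C^2_{\sp}([-1,1])$, it is Lipschitz with constant $L_g:=\max_{[-1,1]}g'<\infty$, so by the mean value theorem $s(t)-p(t)\ge L_g^{-1}\big(g(s(t))-g(p(t))\big)\ge c_0$ with $c_0:=\tfrac{\kappa m}{4\pi L_g\|u\|_{\infty}}>0$; this constant depends only on $g$, $m$ and $\|u\|_{\infty}$, as required.

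The argument is essentially elementary once Lemmas~\ref{L.1} and~\ref{pless1} are available, and I do not expect a genuine obstacle. The two points that need care are that the support of $u(\cdot,t)$ is exactly the interval $(p(t),1]$ anchored at $p(t)$ --- so that the nonlocal quantity $\lambda(t)$ in \eqref{3eq:ob4} really is the average of $g$ on $[p(t),1]$ --- and that $p(t)$ stays uniformly bounded away from $1$; both are supplied by Lemmas~\ref{L.1} and~\ref{pless1}, and it is worth noting that the \emph{strictness} of the monotonicity of $g$ (not merely monotonicity) is what makes the average lie strictly between the endpoint values and thus pins down $s(t)$.
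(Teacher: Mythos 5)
Your proof is correct and follows essentially the same route as the paper's: existence and uniqueness of $s(t)$ come from the strict monotonicity of $g$ combined with $g(-1)<\lambda(t)<g(1)$, and the quantitative gap \eqref{interface} is extracted from the estimate \eqref{suffC2b} of Lemma~\ref{pless1} together with an upper bound on $g'$ (your $L_g:=\max_{[-1,1]}g'$), which correctly converts the $g$-gap into an $x$-gap. The paper is terser --- it obtains $g(p(t))\le\lambda(t)$ directly from the obstacle condition \eqref{3eq:ob3} rather than from the averaging observation --- but the substance is identical, and your explicit reliance on Lemma~\ref{L.1} to identify $\{u(\cdot,t)>0\}=(p(t),1]$ is in fact already implicit in the proof of Lemma~\ref{pless1}, so no genuinely new ingredient is introduced.
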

	
	\begin{proof}
		We notice due to \eqref{1eq:massconservation}, \eqref{gassumptions} and \eqref{g1d} that $g(-1)< \lambda(t) < g(1)$ for all $t\in [0,T]$.
		Thus, there exists $s(t) \in (-1,1)$ such that \eqref{critical} holds true.	Furthermore, due to \eqref{g1d}, the function $g^{-1}:[g(-1),g(1)] \to [-1,1]$ is well defined.
		Then, recalling that $g(p(t))\leq \lambda(t)$, it follows that $p(t)\leq s(t)$.
		Inequality \eqref{interface} then follows from \eqref{suffC2b} and \eqref{g1d}.
	\end{proof}

	\medskip
	Finally, we formulate a non-degeneracy lemma, analogous to \cite[Lemma 3.3]{LNRV23}. 
	
	\begin{lemma}\label{L.degeneracy}
		Let $0\leq t_1\leq t_2$, $x_0\in (-1,1]$ and $0<\rho\leq 1$ are given such that $x_0+2\rho \leq p(t_1)$ and such that for $u\in W^{2,1}_{p,\sp}((-1,1)\times (t_1,t_2))$
		\begin{equation}
			\partial_t u-\big((1-x^2)u'\big)'\leq -\theta H(u)\quad
			\text{ in } (-1,x_0+2\rho) \times (t_1,t_2)\,,
			\label{3eq:LemDeg}
		\end{equation}
		Moreover, assume that
		\begin{equation}
			u \leq \theta \rho^2 \quad\text{ on } (-1,x_0+2\rho) \times (t_1,t_2)\,.
			\label{3eq:AssNonDeg}
		\end{equation}
		Then $u=0$ holds in $(-1,x_0+\rho)\times (t_1,t_2)$.
		This in particular implies that $p(t) \geq x_0 + \rho$ in $[t_1,t_2]$.
	\end{lemma}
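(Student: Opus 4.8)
The plan is to prove the lemma by a barrier argument: I build an explicit, time-independent supersolution of the obstacle-type inequality \eqref{3eq:LemDeg} on the spherical cap $\{x<x_0+2\rho\}$ that vanishes identically on $(-1,x_0+\rho]$, and then invoke the parabolic comparison principle. First note that $x_0+2\rho\le p(t_1)$ together with \eqref{pdef} forces $u(\cdot,t_1)\equiv 0$ on $(-1,x_0+2\rho)$, and that $x_0+2\rho<1$ (since $p(t_1)<1$, cf.\ Lemma~\ref{pless1}). In the axisymmetric picture the interval $(-1,x_0+2\rho)$ corresponds to a spherical cap on $\sphere$ whose topological boundary is the single circle $\{x_1=x_0+2\rho\}$: the endpoint $x=-1$ is the south pole, an interior point of $\sphere$, so although the coefficient $1-x^2$ in \eqref{LB-axi} degenerates there, no boundary datum is required at $x=-1$ (equivalently the first-order coefficient $-2x$ equals $2>0$ at $x=-1$ and points into the interval, so Fichera's criterion applies). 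Consequently the maximum principle used below only sees data on $\{t=t_1\}$ and on the lateral cylinder over $\{x=x_0+2\rho\}$.

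To build the barrier, for $a\in(-1,x_0+\rho)$ let $\Phi=\Phi_a$ be defined by $\Phi\equiv 0$ on $(-1,a]$ and, on $(a,x_0+2\rho)$, as the stationary obstacle profile solving $\big((1-x^2)\Phi'\big)'=\theta$ with $\Phi(a)=\Phi'(a)=0$; explicitly $(1-x^2)\Phi'(x)=\theta(x-a)$, so $\Phi$ is positive and increasing on $(a,x_0+2\rho)$ and, since $1-x^2\le 1$, satisfies $\Phi(x)\ge\tfrac{\theta}{2}(x-a)^2$. Choosing $a$ so that $\Phi_a(x_0+2\rho)\ge\theta\rho^2$ — which, thanks to this quadratic lower bound together with the relation between the constants $2\rho$, $\rho$ and $\theta\rho^2$ appearing in the hypotheses, is possible with $a\le x_0+\rho$ — we obtain a nonnegative, nondecreasing $\Phi$ that vanishes identically on $(-1,x_0+\rho]$ and has $\Phi(x_0+2\rho)\ge\theta\rho^2$. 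Being time-independent and solving $\big((1-x^2)\Phi'\big)'=\theta H(\Phi)$ a.e., $\Phi$ satisfies $\partial_t\Phi-\big((1-x^2)\Phi'\big)'\ge-\theta H(\Phi)$.

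For the comparison, set $L=\partial_x\big((1-x^2)\partial_x\big)$ and $Q=(-1,x_0+2\rho)\times(t_1,t_2)$. On the open set $\{u>\Phi\}\subset Q$ one has $u>\Phi\ge 0$, hence $H(u)=1$, and subtracting \eqref{3eq:LemDeg} from the supersolution inequality for $\Phi$ gives, a.e.\ in $\{u>\Phi\}$,
\begin{equation*}
\partial_t(u-\Phi)-L(u-\Phi)\ \le\ -\theta H(u)-\big(\partial_t\Phi-L\Phi\big)\ \le\ -\theta+\theta\ =\ 0 .
\end{equation*}
Thus $u-\Phi$ is a strong subsolution of the degenerate homogeneous heat equation on $\{u>\Phi\}$, so by the weak maximum principle on the cap its maximum is attained on the parabolic boundary of $\{u>\Phi\}$, which lies in $\{u=\Phi\}$ (where $u-\Phi=0$), in $\{t=t_1\}$ (where $u-\Phi=-\Phi\le0$, using $u(\cdot,t_1)\equiv0$ there), and in $\{x=x_0+2\rho\}$ (where $u-\Phi\le\theta\rho^2-\Phi(x_0+2\rho)\le0$ by \eqref{3eq:AssNonDeg} and the choice of $\Phi$), the pole $x=-1$ not contributing. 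Hence $u\le\Phi$ on $Q$, and since $\Phi\equiv0$ on $(-1,x_0+\rho]$ and $u\ge0$ we conclude $u\equiv0$ on $(-1,x_0+\rho)\times(t_1,t_2)$, whence $p(t)\ge x_0+\rho$ for all $t\in[t_1,t_2]$.

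The step I expect to be the main obstacle is the barrier construction: verifying that the stationary profile of the degenerate operator $\partial_x\big((1-x^2)\partial_x\big)$ grows fast enough over $(x_0+\rho,x_0+2\rho)$ to reach the value $\theta\rho^2$, i.e.\ that the free boundary $a$ can be placed at or to the left of $x_0+\rho$; the bound $\Phi(x)\ge\tfrac{\theta}{2}(x-a)^2$ (which is even more favourable near the poles, where $1-x^2$ is small) is what makes this go through, and it is precisely here that the numerical relations among $2\rho$, $\rho$ and $\theta\rho^2$ in the hypotheses are used. The two remaining technical points are routine but should be stated: the maximum principle at the degenerate endpoint $x=-1$ (cleanest when the argument is phrased intrinsically on the spherical cap, where $-1$ is an interior point), and the fact that $u$ is only a strong $W^{2,1}_{p,\sp}$ solution, so the differential inequalities and the comparison must be read in the a.e./strong sense rather than classically.
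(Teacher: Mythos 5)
Your argument is a genuinely different route from the paper's. The paper fixes a putative point $(y,t^*)$ with $u(y,t^*)>0$ and compares $U=u_{\sp}$ with the time-dependent quadratic $Q(x,t)=\theta\big(|x-y|^2+(t^*-t)\big)$ centered at $y$ on a small spherical ball, deriving a contradiction; you construct a single stationary obstacle-type supersolution $\Phi=\Phi_a$ with free boundary at $a$ and compare globally on $(-1,x_0+2\rho)\times(t_1,t_2)$. The trade-off: the paper's barrier vanishes at the test point at the final time, so it only needs to dominate $\theta\rho^2$ on the lateral boundary and does so with room to spare; your stationary barrier has to simultaneously reach $\theta\rho^2$ at $x=x_0+2\rho$ \emph{and} vanish on all of $(-1,x_0+\rho]$.

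This is where the gap lies. From $(1-x^2)\Phi'=\theta(x-a)$ and $1-x^2\le 1$ you get $\Phi(x_0+2\rho)\ge\tfrac{\theta}{2}(x_0+2\rho-a)^2$, and this bound is essentially sharp when $x_0+2\rho$ lies away from the poles (there $1-x^2\approx 1$). Taking $a=x_0+\rho$ — which is forced if $\Phi$ is to vanish on all of $(-1,x_0+\rho]$, as your conclusion requires — gives only $\Phi(x_0+2\rho)\approx\tfrac{\theta}{2}\rho^2$, which does \emph{not} dominate the hypothesis bound $u\le\theta\rho^2$ on the lateral boundary $\{x=x_0+2\rho\}$, and the comparison breaks down there. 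If instead you move $a$ left so that $\Phi(x_0+2\rho)\ge\theta\rho^2$ — which requires $a\le x_0+(2-\sqrt 2)\rho$ — then $\Phi>0$ on $(a,x_0+\rho]$, and the conclusion of the comparison is only $u\equiv 0$ on $(-1,a)$, strictly weaker than claimed. Your phrase ``is possible with $a\le x_0+\rho$'' conflates these two incompatible requirements; the ``relation between the constants $2\rho$, $\rho$ and $\theta\rho^2$'' does not in fact provide the missing factor of $2$. The argument closes as written only if one weakens the hypothesis to $u\le\tfrac{\theta}{2}\rho^2$, or weakens the conclusion accordingly; to recover the stated constants you should switch to a moving barrier centered at the point to be excluded, as the paper does. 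The remaining ingredients — the Fichera treatment of the degenerate endpoint $x=-1$, the $W^{2,1}_p$ strong-solution comparison, and the observation $u(\cdot,t_1)\equiv 0$ on the cap — are all correctly handled.
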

	
	\begin{proof}
		Without loss of generality we may assume $t_1=0$ and set $T=t_2$, $p_0=p(0)$.
		
		Consider the function $U=u_{\sp}\in W^{2,1}_p(\sphere\times (0,T))$.
		Then $U$ solves \eqref{2eq:ob1} in $\big(\sphere\cap \{x\cdot\vec{e}_1<x_0+2\rho\}\big)\times (0,T)$ and satisfies $U(\cdot,0)=0$ in $\big(\sphere\cap \{x\cdot\vec{e}_1<x_0+2\rho\}\big)$.
		
		Assume by contradiction that $u(y_1,t_1)>0$ for some $-1\leq y_1<x_0+\rho$ and $0<t_1<T$.
		Let $y:= (y_1,0,\sqrt{1-y_1^2})$.
		Consider the comparison function 
		\begin{equation*}
			Q:B_{\sphere}(y,\rho)\times (0,t_1)\to\R,\quad
			Q(x,t)= \theta\big(|x-y|^2 + (t_1-t)\big)\,,
		\end{equation*}
		where $B_{\sphere}(y,\rho)$ refers to the respective ball in $\sphere$ with respect to the distance in $\R^3$.
		
		Then
		\begin{equation*}
			\big(\partial_tQ-\Delta Q\big)(x,t) = \theta\big(-1 -4  -2\vec{H}_{\sphere}(x)\cdot(x-y)\big)
			= \theta\big(-1-4+2-2x\cdot y \big)\leq -\theta\,.
		\end{equation*}
		We use a comparison principle on $\big(B_{\sphere}(y,\rho)\times (0,t_1)\big)\cap \{U>0\}$.
		
		We first observe that $B_{\sphere}(y,\rho)\subset  \{x\cdot\vec{e}_1<x_0+2\rho\}$ and that $Q\geq U$ holds on $\partial \{U>0\}$ and in $B_{\sphere}(y,\rho)\times\{0\}$.
		Furthermore on $\{x\in\sphere\,:\, |x-y|=\rho\}$ we have $Q\geq \theta\rho^2\geq U$ by \eqref{3eq:AssNonDeg}.
		
		It follows that $U(y,t_1)\leq Q(y,t_1)=0$, a contradiction to our assumption.
	\end{proof}

	\subsection{Proof of Theorem \ref{main_thm}}
	We define
	\begin{equation}\label{deltanulldef}
		\delta_0:=\frac 1 8 \min \Big (c_0,\frac{m}{2\pi \|u\|_{\infty}},\frac{\kappa m}{4 \pi \|u\|_{\infty}}\Big)
	\end{equation}
	with $c_0$ as in \eqref{interface}.
	
	{\bf Step 1:} 
	We prove the uniform lower-semicontinuity from the right, more precisely:
	For any $\tilde t\in [0,T)$ and any $\delta>0$ there exists $\omega(\delta)$ only depending on $g,m,\|u\|_{C^{\alpha,\alpha/2}([-1,1]\times[0,T])}$ such that
	\begin{equation}\label{lowerbound}
		p(\tilde t) - \delta \leq p(t) \qquad \mbox{ for all } t \in [\tilde t, \tilde t + \omega(\delta)]\,.
	\end{equation}
	Without loss of generality we can assume $\tilde t=0$ in the following.
	
	We fix any $\delta \in (0,\delta_0]$ and suppose that $p(0)\geq -1+\delta$ since otherwise there is nothing to show. 
	
	By the assumption \eqref{initialdata} on the initial data and the remarks at the beginning of Section \ref{sec:prelim} for any fixed $\alpha>0$ we can choose $\omega>0$, $\omega=\omega(\delta)$ such that
	\begin{equation}
		\|u\|_{C^{\alpha,\alpha/2}([-1,1]\times [0,T])} \omega^{\frac{\alpha}{2}}
		\leq \frac{\kappa}{g_{\max}}\frac{c_0}{8}4\delta^2\,.
		\label{eq:t0def}
	\end{equation}
	Next we define
	\begin{equation*}
		s^*(\omega) := \inf\big\{s(\tau)\,:\, 0\leq \tau\leq \omega\big\}\,.
	\end{equation*}
	Due to \eqref{interface} we always have $s^*(\omega)\geq -1+c_0$.
	
	We set
	\begin{equation}
		b:=\min\Big\{p(0),s^*(\omega)-\frac{c_0}{8}\Big\}\,.
		\label{eq:defb}
	\end{equation}
	For any $0\leq t\leq \omega$ and any $-1\leq x\leq b$ we have $b\leq s(t)-\frac{c_0}{8}$ and
	\begin{equation*}
		g(x) -\lambda(t) \leq g(b) -g(s(t)) \leq -\kappa \frac{c_0}{8}\,,
	\end{equation*}
	hence
	\begin{equation}
		1-\frac{g(x)}{\lambda(t)} \geq  \frac{\kappa}{\lambda(t)}\frac{c_0}{8} 
		\geq \frac{\kappa}{g_{\max}}\frac{c_0}{8}\,.
	\end{equation}
	We deduce that
	\begin{align}
		&\partial_t u - \big((1-x^2)u'\big)' \leq -\frac{\kappa}{g_{\max}}\frac{c_0}{8}H(u)
		\quad&&\text{ on }(-1,b)\times (0,\omega)\,,\\
		&u(\cdot,0 )=0\quad&&\text{ in }(-1,b)\,,
	\end{align}
	where we have used $b\leq p(0)$.
	
	Due to \eqref{eq:t0def} and Lemma \ref{L.degeneracy} we deduce that
	\begin{equation}
		u(x,t)=0 \quad\text{ for all }-1\leq x\leq b-\delta\,,\, 0\leq t\leq \omega\,,
	\end{equation}
	which implies
	\begin{equation}
		p(t)\geq b-\delta\quad\text{ for all } 0\leq t\leq \omega\,.
		\label{eq:lbb}
	\end{equation}
	Assume that $b=s^*(\omega)-\frac{c_0}{8}$ in \eqref{eq:defb}.
	However, \eqref{interface} and \eqref{eq:lbb} then yield $s^*(\omega)\geq b-\delta+c_0\geq b+\frac{7c_0}{8}$, a contradiction.
	
	Therefore $b=p(0)$ and by \eqref{eq:lbb} the claim is proved.
	
	\medskip
	\newpage
	{\bf Step 2:} 
	We now prove the uniform lower-semicontinuity from the right, more precisely: for any $\delta \in (0,\delta_0]$ there exists $\omega(\delta)>0$ such that
	\begin{equation}\label{upperbound}
		p(t) \leq  p(\tilde t) + \delta  \qquad \mbox{ for all } t \in [\tilde t, \tilde t + \omega(\delta)]\,.
	\end{equation}
	Towards this aim we will construct a suitable subsolution, more precisely, we are going to show that
	there exists a nonnegative, continuous function $W$ that depends only on the initial data $u_0$, with $W(\xi)>0$ for all $\xi>0$ and $W(0)=0$ such that
	\begin{equation}\label{subclaim}
		u(x,s) \geq W(x-p(s)), \quad \text{ for  } x \in (p(s),p(s)+\delta_0)\,, s \in [0,T]\;.
	\end{equation}
	Indeed,
	if \eqref{subclaim} holds we define for any $\tilde t \geq 0$
	the function $w$ as the solution to
	\begin{align}
		\partial_t{w}-\big((1-x^2) w' \big)'&=0 &&\quad \text{in } (p(\tilde t),p(\tilde t)+\delta_0)\times(\tilde t,T] \label{v1}\\
		w(x,\tilde t)&=W(x-p(\tilde t)) &&\quad \text{in } (p(\tilde t),p(\tilde t)+\delta_0)\label{v2}\\
		w(p(\tilde t),t)=w(p(\tilde t)+\delta_0,t)&=0 &&\quad \text{in } (\tilde t,\tilde t+\omega(\delta_0)] \label{v3}\;
	\end{align}
	and let $\tilde w:= w-(t-\tilde t)$.
	Equation \eqref{v1} yields
	\begin{align*}
		\partial_t{\tilde w }-\big((1-x^2)  {\tilde w}' \big)'=-1\leq \partial_t{u}-\big((1-x^2)  u' \big)'
	\end{align*}
	in $(p(\tilde t),p(\tilde t)+\delta_0)\times(\tilde t,T]$. Furthermore, $u(x,\tilde t)\geq W(x-p(\tilde t))$ for all $x\in(p(\tilde t),p(\tilde t)+\delta_0)$ by \eqref{subclaim} and  $u(p(\tilde t),t),u(p(\tilde t)+\delta_0,t)\geq 0= w(p(\tilde t),t)=w(p(\tilde t)+\delta_0,t)$ for all $t\in (\tilde t,T]$. Hence, we obtain by a comparison principle argument that
	\begin{equation}\label{v4}
		u(x,t)\geq  \tilde w \quad \text{in } [p(\tilde t),p(\tilde t)+\delta_0]\times[\tilde t,T].
	\end{equation}
	If we consider any $0<\delta\leq \tfrac{\delta_0}{2}$  it holds
	\begin{align*}
		u\geq \tilde w >0 \quad \text{in } [p(\tilde t)+\delta,p(\tilde t)+\delta_0-\delta) \times[\tilde t,\tilde t+\omega(\delta)]
	\end{align*}
	redefining $\omega(\delta)$ if necessary.
	Moreover, the monotonicity of $u$  yields that
	\begin{align*}
		u(x,t)>0 \quad \text{in } [p(\tilde t)+\delta,1]\times[\tilde t,\tilde t+\omega(\delta)]\;
	\end{align*}
	which proves the claim.
	
	\medskip
	We now proceed to the proof of \eqref{subclaim}.
	Due to Step $1$, we can fix a monotone increasing positive function $\omega:\R^+\to\R^+$ such that for all $s\in (-1,1]$, $t\in [-1,1)$
	\begin{equation}\label{Step1}
		p(t) \leq p(s)+\delta \quad\text{ if } 0\leq s-t\leq \omega(\delta)\,.
	\end{equation}
	To prove \eqref{subclaim} we argue differently depending on the value of $s$. 
	
	\medskip
	{\bf Case 1:} Consider $s >\omega(\delta_0)$ and let $\delta \in (0,\tfrac{\delta_0}{2}]$.
	By \eqref{Step1} we have 
	\begin{equation*}
		p(t) \leq p(s)+\delta
		\quad\text{ for all }t\in \big[s-\omega(\delta),s\big]\,.
	\end{equation*}
	Therefore, $u$ is a solution of
	\begin{equation*}
		\partial_t{u}-\big((1-x^2)  u'  \big)'=-1+\frac{g}{\lambda(t)} \quad \text{ in } [p(s)+\delta,p(s)+2\delta] \times (s-\omega(\delta),s]\;.
	\end{equation*}
	We observe that, by regularity of $u$ and standard embedding theorems, we can differentiate the above equation with respect to $x$.
	Then for $v=u'$ we obtain, using \eqref{g1d}, that
	\begin{equation*}
		\partial_t{v}-\big((1-x^2) v \big)''=\frac{g'}{\lambda(t)} \geq M\;  \quad \text{ in } [p(s)+\delta,p(s)+2\delta] \times (s-\omega(\delta),s]\;,
	\end{equation*}
	for some constant $M:=M({\vert \vert g\vert \vert}_{\infty}, {\vert \vert g'\vert \vert}_{\infty})>0$. 
	To construct a suitable subsolution let $V$ satisfy
	\begin{align}
		\partial_t{V}-\big((1-x^2)  V \big)''&\leq M, \quad \text{ in } [p(s)+\delta,p(s)+2\delta] \nonumber\\
		&\qquad\qquad\qquad\qquad \times (s-\omega(\delta),s]\;, \label{subeq}\\
		V(x,t^*_{\delta})&=0, \; \; \quad \text{ in } [p(s)+\delta,p(s)+2\delta]\;, \label{subtdata}\\
		V(p(s)+\delta,t)=V(p(s)+2\delta,t)&=0\,, \,\,\quad\text{ for } t \in  (s-\omega(\delta),s]\;. \label{subbcdata}
	\end{align}
	It is easy to verify that the function
	\begin{equation}
		V(x,t)= \frac{M}{2}(t-s+\omega(\delta))e^{-\mu\frac{t-s+\omega(\delta)}{\delta^2}}\cos
		\Big( \frac{\pi (x-(p(s)+\tfrac 3 2 \delta))}{\delta}\Big)\;,
	\end{equation}
	satisfies \eqref{subeq}-\eqref{subbcdata} if $\mu>0$ is sufficiently large.
	
	\medskip
	In particular we find that
	\begin{equation*}
		u'(x,s) \geq C \omega(\delta)e^{-\mu \frac{\omega(\delta)}{\delta^2}}=:F(\delta)\quad\text{ in }\Big[p(s)+\frac{5\delta}{4},p(s)+\frac{7\delta}{4}\Big]\,.
	\end{equation*}
	Therefore, since $\delta \leq \delta_0$ but otherwise arbitrary,
	we deduce
	\begin{equation*}
		u'(x,s) \geq  \tilde W (x-p(s)) \qquad \text{in } \big (p(s), p(s)+\delta_0\big ]
	\end{equation*}
	for some function $\tilde W$ which is positive on $\R_+$.
	Integrating this equation
	we find indeed that \eqref{subclaim} holds for $s>\omega(\delta_0)$.
	
	\medskip
	{\bf Case 2:} Next we investigate the case $s\leq \omega(\delta_0)$.
	Again by \eqref{Step1} we have
	\begin{equation*}
		p(t) \leq p(s)+\delta
		\quad\text{ for all }t\in \big[0,s\big]\,.
	\end{equation*}
	We find as above that  $v=u'$ solves
	\begin{equation*}
		\partial_t{v}-\big((1-x^2) v \big)''=\frac{ g'}{\lambda(t)}\geq M\;  \quad  [p(s)+\delta,p(s)+2\delta] \times (0,s]\;.
	\end{equation*}
	We construct a subsolution $V$ by solving the problem
	\begin{align}
		\partial_t{V}-\big((1-x^2)  V \big)''&= M, \quad  && \mbox{ in } [p(s)+\delta,p(s)+2\delta] \times (0,s]\;, \label{subeq2}\\
		V(x,0)&=u_0', \; \; \quad  && \mbox{ in }   [p(s)+\delta,p(s)+2\delta]\;, \label{subtdata2}\\
		V(p(s)+\delta,t)=V(p(s)+2\delta,t)&=0, \;\; \quad  &&\mbox{ in }(0,s] \label{subbcdata2}\;.
	\end{align}
	Since by assumption $u_0'> 0$ in $(p(0),1]\supset [p(s)+\delta,p(s)+2\delta]$, we conclude that $V(x,s)>\phi(\delta, u_0)$ in $[p(s)+ \tfrac 5 4 \delta, p(s) + \tfrac 7 4 \delta]$ and as above we finally conclude that there exists a positive function $W_2$ such that
	\begin{equation}\label{subclaimP3}
		u(x,s)\geq W_2(x-p(s)), \quad \text{in } \;[p(s),p(s)+\delta_0 ]
	\end{equation}
	for $s \in [0,\omega(\delta_0)]$ from which \eqref{subclaim} follows with  $W:=\min\{W_1,W_2\}>0$.
	
	\medskip
	{\bf Step 3:}  Since $\omega(\delta)$ does not depend on $\tilde t$, the estimates \eqref{lowerbound} and \eqref{upperbound} imply the uniform continuity from the right therefore that $p$ is a continuous function.

	
	\section{An oscillatory solution to the classical parabolic obstacle problem}
	\label{sec:obsc-obs}
	
	In this section we will provide a rigorous construction of an oscillatory solution to the classical parabolic obstacle problem.
	
	\medskip
	The classical parabolic obstacle problem in one space dimension and in the whole space can be formulated as follows. 
	We say that a function $u\colon \R \times (0,T)\to [0,\infty)$ is a solution to the obstacle problem with data $u_0$ if  $u \in W^{2,1}_p( \R\times (0,T))$  for all $p \in [1,\infty)$ and if it satisfies
	\begin{align}
		\partial_t u - u'' &= - H(u) \qquad \text{ a.e.~in } \R \times (0,T) \,\label{obst1}\\
		u(\cdot, 0)& = u_0 \qquad \text{ in a suitable sense. }
		\label{obst3}
	\end{align}
	
	In order to prepare the construction of an oscillatory solution to a nonlocal obstacle problem 
	in Section \ref{sec:nonlocal-obs-R}  we also consider the problem with a rescaled right-hand side:
	For $\lambda>0$ we denote by $u_\lambda$ the solution of
	\begin{align}
		\partial_t u_\lambda - u_\lambda'' &= - \lambda^3 H(u_\lambda) \qquad \mbox{ a.e.~in } \R \times (0,T) \,\label{4eq:obst1-lambda}\\
		u_\lambda(\cdot, 0)& = u_0 \qquad \text{ in a suitable sense. }
		\label{5eq:obst3-lambda}
	\end{align}
	
	We will need the following consequence of the maximum principle, which is a variant of Lemma \ref{L.degeneracy}.
	
	\begin{lemma} \label{4lem:maxprinciple}
		Consider any $x_0\in\R$, $t_0>0$, $\theta>0$ and $0<\varrho<\sqrt{t_0}$.
		Then for any $u\in W^{2,1}_1\big(B(x_0,\varrho)\times (t_0-\varrho^2,t_0)\big)$ with
		\begin{align*}
			\partial_t u -\Delta u &\leq -\theta H(u),\quad
			0\leq u\leq \frac{\theta}{3}\varrho^2\qquad
			\text{ in }B(x_0,\varrho)\times (t_0-\varrho^2,t_0)
		\end{align*}
		we have $u(x_0,t_0)=0$.
	\end{lemma}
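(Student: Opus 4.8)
The plan is to argue exactly as in the proof of Lemma~\ref{L.degeneracy}, by inserting an explicit quadratic barrier, with the simplification that here the domain is flat, so the barrier can be chosen to solve the relevant equation with equality rather than with a slack term.

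If $u(x_0,t_0)=0$ there is nothing to prove, so I would assume $u(x_0,t_0)>0$; by continuity of $u$ (which holds in particular when, as in the intended applications, $u\in W^{2,1}_p$ for every $p<\infty$) this forces $(x_0,t_0)$ to lie in the relatively open set $\{u>0\}$. Working in one space dimension, so $\Delta=\partial_{xx}$, I would introduce on $B(x_0,\varrho)\times(t_0-\varrho^2,t_0)$ the comparison function
\[
 Q(x,t):=\frac{\theta}{3}\Big(|x-x_0|^2+(t_0-t)\Big),\qquad\text{for which}\qquad \partial_t Q-\Delta Q=-\frac{\theta}{3}-\frac{2\theta}{3}=-\theta .
\]
Then I would set $D:=\{u>0\}\cap\big(B(x_0,\varrho)\times(t_0-\varrho^2,t_0)\big)$ and verify that $Q\ge u$ on the parabolic boundary $\partial_pD$: on the portion of $\partial_pD$ contained in $\{u=0\}$ one has $Q\ge 0=u$; on the lateral part $\{|x-x_0|=\varrho\}$ one uses $t_0-t\ge0$ together with the hypothesis $u\le\frac{\theta}{3}\varrho^2$ to get $Q\ge\frac{\theta}{3}\varrho^2\ge u$; and on the bottom $\{t=t_0-\varrho^2\}$ one uses $|x-x_0|^2\ge0$ to obtain $Q\ge\frac{\theta}{3}\varrho^2\ge u$ again. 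Since $\partial_t u-\Delta u\le-\theta H(u)=-\theta$ a.e.\ on $\{u>0\}$, the difference $w:=Q-u$ satisfies $\partial_t w-\Delta w\ge0$ in $D$, i.e.\ it is a supersolution of the heat equation with nonnegative parabolic boundary data.

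The parabolic comparison (minimum) principle then gives $w\ge0$ on $\overline D$, and evaluating at $(x_0,t_0)$ yields $0\le u(x_0,t_0)\le Q(x_0,t_0)=0$, which contradicts $u(x_0,t_0)>0$ and proves $u(x_0,t_0)=0$. The one step requiring care is the invocation of the comparison principle on the a~priori irregular set $D$ at the stated regularity level: I would check that $(x_0,t_0)$ is approached from within $D$ (which holds by continuity of $u$), that $\partial_pD$ is exhausted by the three pieces listed above so that no boundary contribution is lost, and --- if one insists on merely $W^{2,1}_1$ data rather than the $W^{2,1}_p$, $p<\infty$, available in the applications --- pass through a routine approximation to reduce to the comparison principle for strong solutions. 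Everything else is the elementary barrier computation, and I note that the constant $\theta/3$ in the hypotheses is precisely what makes the lateral and bottom estimates close.
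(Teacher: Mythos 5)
Your proof is correct and coincides with the paper's: you use the identical barrier $Q(x,t)=\tfrac{\theta}{3}\bigl(|x-x_0|^2+(t_0-t)\bigr)$, the same computation $\partial_tQ-\Delta Q=-\theta$, the same comparison on $\{u>0\}\cap\bigl(B(x_0,\varrho)\times(t_0-\varrho^2,t_0)\bigr)$, and the same contradiction at $(x_0,t_0)$. The only (harmless) additions are your explicit enumeration of the three parts of the parabolic boundary and your remark about the regularity needed to invoke the comparison principle, which the paper leaves implicit.
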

	
	\begin{proof}
		Assume $u(x_0,t_0)>0$ and define
		\begin{equation*}
			w(x,t) := \frac{\theta}{3}|x-x_0|^2 + \frac{\theta}{3}(t_0-t)
		\end{equation*}
		Then
		\begin{align*}
			\partial_t w -\Delta w = -\theta\quad\text{ in }B(x_0,\varrho)\times (t_0-\varrho^2,t_0),
		\end{align*}
		and $w(x,t_0-\varrho^2)\geq \frac{\theta}{3}\varrho^2$ for all $x\in\R$ and $w(x,t)\geq \frac{\theta}{3}\varrho^2$ for all $x\in\partial B(x_0,\varrho)$, $t\in (t_0-\varrho^2,t_0)$.
		
		We apply the weak  maximum principle in $\big(B(x_0,\varrho)\times (t_0-\varrho^2,t_0)\big)\cap \{u>0\}$ and deduce $u\leq w$.
		This yields $u(x_0,t_0)\leq 0$, a contradiction to our assumption.
	\end{proof}
	
	Our main result on the oscillatory behavior for the classical parabolic obstacle problem is the following.
	\begin{theorem}\label{main.ob}
		There exist initial data $u_0 \in {\cal M}_+(\R)$
		with compact support in $(0,1)$ such that the solution to \eqref{obst1}, \eqref{obst3} satisfies
		\begin{equation}\label{oscillationOb}
			\liminf \limits_{t\to 0^+} |\{u(\cdot,t)>0\}| < \limsup\limits_{t\to 0^+} |\{u(\cdot,t)>0\}|\;.
		\end{equation}
	\end{theorem}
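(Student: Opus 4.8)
\noindent\textit{Proof plan.}
The plan is to build $u_0$ as a lacunary superposition of ``batches'' of Dirac masses. An elaborate construction seems unavoidable: since $u_0$ is a finite measure, near any single point it can only generate a bump whose positivity set has width $\to 0$ as $t\to 0^+$, so to keep $|\{u(\cdot,t)>0\}|$ bounded away from $0$ along a subsequence of times one is forced to distribute mass over many well-separated spots at each relevant scale. Concretely, I would fix a super-lacunary sequence $\tau_k\downarrow 0$ with $\tau_{k+1}\le\tau_k^2$ and $\tau_1\le\tfrac14$, set $\ell_k:=C\sqrt{\tau_k}$ and $a_k:=c_1\tau_k^{3/2}$ for constants $C\le 2$ and $c_1$ to be chosen, and take
\[
u_0:=\sum_{k\ge 1}a_k\sum_{i:\,i\ell_k\in(1/4,3/4)}\delta_{i\ell_k}\,.
\]
Batch $k$ then consists of $\asymp\ell_k^{-1}$ equally spaced atoms of mass $a_k$, of total mass $\asymp a_k/\ell_k\asymp\tau_k$, so $\sum_k\tau_k<\infty$ gives $u_0\in\mathcal{M}_+(\R)$ with $\operatorname{supp}u_0=[1/4,3/4]\subset(0,1)$. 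The two ``witness'' time sequences would be the good times $g_k:=\tau_k$ and the bad times $s_k:=\tau_k(\log(1/\tau_k))^{-10}$, which for large $k$ satisfy $\tau_{k+1}<s_k<\tau_k$; I would show $|\{u(\cdot,g_k)>0\}|\ge\tfrac14$ while $|\{u(\cdot,s_k)>0\}|\to 0$, and since $g_k,s_k\to 0$ this gives \eqref{oscillationOb}.

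The only analytic input about $u$ I would use are the comparison bounds $e^{t\partial_{xx}}u_0-t\le u(\cdot,t)\le e^{t\partial_{xx}}u_0$ for $t>0$ (with $e^{t\partial_{xx}}$ the heat semigroup): the upper one because $\partial_t u-u''=-H(u)\le 0$, the lower one because $\partial_t u-u''\ge-1$, in both cases combined with the sense in which the initial datum is attained. Since $u_0$ is only a measure, I would first record the well-posedness of \eqref{obst1}--\eqref{obst3} for $u_0\in\mathcal{M}_+(\R)$ and these comparison statements; both follow from the instantaneous smoothing of $e^{t\partial_{xx}}u_0$ together with the standard theory of the obstacle problem started from a positive time (e.g.\ by mollifying $u_0$ and passing to the limit), and from Lemma~\ref{4lem:maxprinciple}, which is applicable since $u\in W^{2,1}_p(\R\times(0,T))$ for all $p$.

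For the lower bound at a good time $t=\tau_k$: at time $\tau_k$ the Gaussians emanating from the atoms of batch $k$ have width $\asymp\sqrt{\tau_k}=\ell_k/C$, so every $x\in(1/4,3/4)$ lies within $\ell_k/2$ of a batch-$k$ atom, whence $e^{\tau_k\partial_{xx}}u_0(x)\ge\frac{a_k}{\sqrt{4\pi\tau_k}}e^{-(\ell_k/2)^2/4\tau_k}=\frac{c_1}{\sqrt{4\pi}}e^{-C^2/16}\,\tau_k$. Choosing $c_1$ so that $\frac{c_1}{\sqrt{4\pi}}e^{-C^2/16}\ge 2$ then yields $u(\cdot,\tau_k)\ge e^{\tau_k\partial_{xx}}u_0-\tau_k\ge\tau_k>0$ on $(1/4+\ell_k,\,3/4-\ell_k)$, so $|\{u(\cdot,\tau_k)>0\}|\ge\tfrac12-2\ell_k\ge\tfrac14$ for large $k$; hence $\limsup_{t\to0^+}|\{u(\cdot,t)>0\}|\ge\tfrac14$.

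For the upper bound at a bad time $t=s_k$ I would combine $u\le e^{t\partial_{xx}}u_0$ with Lemma~\ref{4lem:maxprinciple} (applied with $\theta=1$) on cylinders $B(x_0,\varrho)\times(s_k-\varrho^2,s_k)$, $\varrho:=\sqrt{s_k/2}$, after splitting $u_0=\mu_{\le k}+\mu_{>k}$ into the atoms of the batches $j\le k$ and $j>k$. The tail $\mu_{>k}$ has total mass $\asymp\sum_{j>k}\tau_j\le 2\tau_{k+1}$, and since $\tau_{k+1}\ll s_k^{3/2}$ (a consequence of $\tau_{k+1}\le\tau_k^2$), the crude bound $e^{t\partial_{xx}}\mu_{>k}\le\|\mu_{>k}\|/\sqrt{4\pi t}$ is already $\le s_k/12$ on the whole cylinder. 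For $\mu_{\le k}$, if $x$ has distance $\ge\Lambda_k\sqrt{s_k}$ from $\operatorname{supp}\mu_{\le k}$ with $\Lambda_k:=\sqrt{6\log(1/s_k)}$, then summing the Gaussian tails of each batch $j\le k$ (using that its spacing $\ell_j\ge C\sqrt{s_k}$ keeps the neighbour contributions bounded) gives $e^{t\partial_{xx}}\mu_{\le k}(x)\lesssim s_k^{-1/2}e^{-\Lambda_k^2/4}\sum_{j\le k}a_j=s_k\sum_{j\le k}a_j\lesssim a_1 s_k$, which is $\le s_k/12$ after a one-time smallness requirement on $\tau_1$. Thus $u\le\tfrac13\varrho^2$ on the cylinder for every $x_0$ outside the set $F:=\{x:\dist(x,\operatorname{supp}\mu_{\le k})<2\Lambda_k\sqrt{s_k}\}$, so Lemma~\ref{4lem:maxprinciple} forces $u(x_0,s_k)=0$ there; hence $\{u(\cdot,s_k)>0\}\subset F$. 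Since $\operatorname{supp}\mu_{\le k}$ has at most $\sum_{j\le k}\ell_j^{-1}\lesssim\tau_k^{-1/2}$ points (because $\tau_j^{-1/2}$ at least doubles as $j$ increases, by super-lacunarity), one gets $|F|\lesssim\Lambda_k\sqrt{s_k}\,\tau_k^{-1/2}=\Lambda_k(\log(1/\tau_k))^{-5}\to 0$ as $k\to\infty$, and therefore $\liminf_{t\to0^+}|\{u(\cdot,t)>0\}|=0<\tfrac14\le\limsup_{t\to0^+}|\{u(\cdot,t)>0\}|$, as desired.

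The main obstacle is the simultaneous calibration of the exponents and constants: batch $k$ must already fill a definite proportion of $(1/4,3/4)$ with positivity at time $\tau_k$ (which pins $a_k\asymp\tau_k^{3/2}$ and $\ell_k\lesssim\sqrt{\tau_k}$ from below), whereas at the slightly earlier time $s_k$ the atoms of all batches $j\le k$ must still be confined to a set of measure $o(1)$ (which pins $\ell_k\gtrsim\sqrt{\tau_k}$ up to the logarithmic gain built into the choice $s_k=\tau_k(\log(1/\tau_k))^{-10}$), while at the same time the infinitely many, ever more densely packed finer batches $j>k$ must remain uniformly negligible at $t=s_k$ — this last requirement is exactly what forces the super-lacunarity $\tau_{k+1}\le\tau_k^2$ and is handled by the crude $L^\infty$ heat bound above. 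A secondary point needing care is matching the scale of the cylinders in Lemma~\ref{4lem:maxprinciple} to the range in which the heat-kernel estimates are sharp, which is why $\varrho$ is taken comparable to $\sqrt{s_k}$ (not $\sqrt{\tau_k}$) at the bad times, and why the cylinder time-window $(s_k/2,s_k)$ avoids the very small times where the atoms of the finer batches have not yet spread out.
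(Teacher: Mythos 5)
Your proposal is correct and reaches the conclusion by a genuinely different construction and a different analytic strategy than the paper. The paper also builds $u_0$ out of Dirac masses at nested scales, but arranges them in a recursive tree (level-$n$ atoms offset from level-$(n-1)$ atoms) and carefully selects index sets $I_n^*$ so that the supports of all the rescaled building-block solutions $u_{\theta_n,\lambda}^{j_1,\dots,j_n}$ are \emph{pairwise disjoint in space--time}; the solution then is literally the sum $\sum_n u^{(n)}_\lambda$, and the oscillation is read off by counting how many blocks are ``on'' versus ``off'' at the two sequences of times $t_n=T_1\theta_n^2$ and $\tilde t_n=\tilde T_1\theta_n^2$. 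This requires Proposition~\ref{P.solproperties} in full (finite extinction, the $\sqrt{6t\ln(1/t)}$ envelope $\ell(t)\le\cdot\le L(t)$) and some bookkeeping to remove overlapping indices while keeping a positive fraction of them; the payoff is a clean additive decomposition that also scales nicely, which is why the paper proves the stronger Theorem~\ref{4thm:main} (uniform in $\lambda$ near $\lambda_0$) in one go for later use in Section~\ref{sec:nonlocal-obs-R}. You instead use flat, deliberately overlapping batches in a fixed interval, give up any decomposition of $u$, and control $u$ directly by the two-sided comparison $e^{t\partial_{xx}}u_0 - t \le u \le e^{t\partial_{xx}}u_0$ (which is exactly what the paper itself uses inside the proof of Proposition~\ref{P.solproperties}, just globally rather than per-block). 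The lower bound at good times then needs only the closest batch-$k$ atom; the upper bound at bad times combines the heat-kernel bound with the clearing-out Lemma~\ref{4lem:maxprinciple}, with the tail batches $\mu_{>k}$ killed by the crude $L^\infty$ heat bound and super-lacunarity $\tau_{k+1}\le\tau_k^2$, and the coarse batches $\mu_{\le k}$ killed by Gaussian tails outside $\Lambda_k\sqrt{s_k}$-neighborhoods of the atoms. This is more elementary and avoids the non-overlap bookkeeping entirely; the price is that constants ($c_1$, $C$, $\tau_1$) must be calibrated against each other, and a small amount of care is needed because Lemma~\ref{4lem:maxprinciple} requires the bound $u\le\varrho^2/3$ on the full parabolic cylinder, not only at the final time --- but since $\varrho^2\sim s_k$ and the cylinder time-window is $(s_k/2,s_k)$, the heat-kernel prefactor $1/\sqrt{4\pi t}$ only loses a factor $\sqrt 2$, which you absorb. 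One minor point worth stating explicitly if you write this up: the well-posedness and comparison principles for $u_0\in\mathcal M_+(\R)$ (which the paper obtains from \cite{BF76} for the single Dirac mass) must be extended to your atomic measure, e.g.\ by mollification and passage to the limit as you indicate; this step is routine but should not be left implicit since it underlies both $u\le e^{t\partial_{xx}}u_0$ and $u\ge e^{t\partial_{xx}}u_0-t$.
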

	
	Actually, we will prove the following stronger statement that will be needed below in Section \ref{sec:nonlocal-obs-R} for the nonlocal case.
	
	\begin{theorem}\label{4thm:main}
		For any $\lambda_0>0$ there exists $0<\kappa<\lambda_0$ with the following property:
		For any $0<\eta<1$ there exist initial data $u_0 \in {\cal M}_+(\R)$
		with compact support in $(0,\eta)$ such that for any $\lambda,\mu$ in $[\lambda_0-\kappa,\lambda_0+\kappa]$ the solutions $u_\lambda$, $u_\mu$ to \eqref{4eq:obst1-lambda}, \eqref{5eq:obst3-lambda} satisfy
		\begin{equation}\label{4eq:oscillation}
			\liminf \limits_{t\to 0^+} |\{u_\lambda(\cdot,t)>0\}| < \limsup\limits_{t\to 0^+} |\{u_\mu(\cdot,t)>0\}|\;.
		\end{equation}
	\end{theorem}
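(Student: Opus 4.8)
The plan is to control the positivity set $\{u_\lambda(\cdot,t)>0\}$ from both sides by the linear heat evolution $v(x,t):=\big(e^{t\Delta}u_0\big)(x)=\tfrac{1}{\sqrt{4\pi t}}\int e^{-|x-y|^2/4t}\,du_0(y)$ of the initial measure, and then to build $u_0$ so that a scale‑invariant density functional of $u_0$ oscillates as $t\downarrow0$. Two comparison facts drive this. Since $\partial_t u_\lambda-u_\lambda''=-\lambda^3H(u_\lambda)\le0$, the function $u_\lambda$ is a heat subsolution, so $0\le u_\lambda(\cdot,t)\le v(\cdot,t)$; since $\partial_tu_\lambda-u_\lambda''\ge-\lambda^3$, it is a supersolution of $\partial_tw-w''=-\lambda^3$ with the same data, so $u_\lambda(\cdot,t)\ge\big(v(\cdot,t)-\lambda^3t\big)_+$. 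The second inequality gives the lower bound $\{v(\cdot,t)>\lambda^3t\}\subseteq\{u_\lambda(\cdot,t)>0\}$. Combining the first with Lemma~\ref{4lem:maxprinciple} (applied with $\theta=\lambda^3$) gives the upper bound: if for some $\varrho<\tfrac14\sqrt{t_0}$ one has $v\le\tfrac{\lambda^3}{3}\varrho^2$ throughout the backward cylinder $B(x_0,\varrho)\times(t_0-\varrho^2,t_0)$, then $u_\lambda(x_0,t_0)=0$.

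\textbf{Reduction to a density estimate for $u_0$.} Decomposing the integral defining $v$ into the dyadic annuli $\{j\sqrt t\le|x-y|<(j+1)\sqrt t\}$ and using $\sum_{j\ge0}e^{-j^2/4}(j+1)^3<\infty$, one gets, for a fixed $x$ and small $t$,
\begin{equation*}
\tfrac{c}{\sqrt t}\,u_0\big(B(x,\sqrt t)\big)\ \le\ v(x,t)\ \le\ \tfrac{C}{\sqrt t}\sum_{j\ge0}e^{-j^2/4}\,u_0\big(B(x,(j+1)\sqrt t)\big).
\end{equation*}
In particular, if $u_0\big(B(x,r)\big)\le\Theta r^3$ for all $r\le r_0$, then $v(x,t)\le C'\Theta t$ for $t$ small. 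Inserting these into the two comparison facts, the positivity set at time $t$ satisfies, up to rescaling radii by universal factors, changing the threshold constant, and discarding a set $N_t$,
\begin{equation*}
\big\{x:\ u_0(B(x,\sqrt t))\ge c_1\lambda^3t^{3/2}\big\}\ \subseteq\ \{u_\lambda(\cdot,t)>0\}\ \subseteq\ \big\{x:\ u_0(B(x,c_2\sqrt t))\ge c_3\lambda^3t^{3/2}\big\}\cup N_t,
\end{equation*}
where $|N_t|$ can be made as small as we wish once $u_0$ has cubic upper regularity off a small set. The decisive point is that the cubic decay of $u_0(B(x,r))$ as $r\to0$ is exactly what cancels the logarithmic corrections a cruder estimate on $v$ would produce. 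The whole problem is now a potential‑theoretic statement about a single measure: choose $u_0$ so that $t\mapsto\big|\{x:u_0(B(x,\sqrt t))\gtrsim\lambda_0^3t^{3/2}\}\big|$ oscillates, uniformly for $\lambda$ near $\lambda_0$.

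\textbf{Construction of $u_0$.} Fix once and for all $\kappa=\kappa(\lambda_0)\in(0,\lambda_0)$ so small that $(\lambda_0\pm\kappa)^3$ differ from $\lambda_0^3$ only by a harmless factor. Given $\eta\in(0,1)$, choose $\delta_0>0$ with $[\delta_0,\eta-\delta_0]\subset(0,\eta)$, a geometric sequence of scales $s_k=\rho^k s_0$ with $\rho,s_0$ small (to be fixed last), and for each $k$ let $A_k\subset[\delta_0,\eta-\delta_0]$ be a union of $N_k\asymp\eta/s_k$ equally spaced intervals of minuscule length $\ell_k$ (with $\ell_k$ far below any scale probed while $A_k$ is relevant). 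With $\nu_k$ the normalized Lebesgue measure on $A_k$ put
\begin{equation*}
u_0:=\sum_{k\ge0}m_k\,\nu_k,\qquad m_k:=C_2\,(\lambda_0+\kappa)^3\,\eta\,s_k^2,
\end{equation*}
a finite positive measure with compact support in $(0,\eta)$. At the resonant scale $r\approx s_k$ the component $\nu_k$ behaves like the uniform measure on the base interval, $\nu_k(B(x,r))\asymp r/\eta$ for \emph{all} $x$; at scales strictly between two consecutive $s_k$ every component is either already in this uniform regime (its total contribution then being $O(\rho)\,\lambda_0^3r^3$) or still ``lumpy'', meaning $B(x,r)$ sees its mass only on the $r$‑neighbourhood of $N_k$ points, a set of measure $\lesssim\eta\sqrt{\rho\ln(1/\rho)}$. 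Taking $\sigma_k$ with $\sqrt{\sigma_k}=s_k$ and $\tau_k$ with $\sqrt{\tau_k}=\sqrt{s_ks_{k+1}}$, the sandwich above yields, for all $\lambda,\mu\in[\lambda_0-\kappa,\lambda_0+\kappa]$ and all large $k$,
\begin{equation*}
\big|\{u_\mu(\cdot,\sigma_k)>0\}\big|\ \ge\ \eta-2\delta_0-2s_k\ \ge\ \tfrac{\eta}{2},\qquad
\big|\{u_\lambda(\cdot,\tau_k)>0\}\big|\ \le\ C\,\eta\sqrt{\rho\ln(1/\rho)}\ \le\ \tfrac{\eta}{100},
\end{equation*}
provided $C_2$ is large enough to absorb the universal constant $c_1$ in the lower bound and then $\rho,s_0$ are small enough (depending on $C_2,\lambda_0,\kappa$) to make the ``in‑between'' and Gaussian‑tail contributions negligible. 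Passing to the limit along $\sigma_k$ and $\tau_k$ gives $\liminf_{t\to0^+}|\{u_\lambda(\cdot,t)>0\}|\le\eta/100<\eta/2\le\limsup_{t\to0^+}|\{u_\mu(\cdot,t)>0\}|$, i.e.\ \eqref{4eq:oscillation}; the same $u_0$ serves all $\lambda,\mu$ in the interval.

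\textbf{Main obstacle.} The comparison principles are standard (one must only check they survive the passage to a measure initial datum), so the work lies in making Step~2 effective and in the bookkeeping of Step~3. One must run the heat‑flow estimates with constants that do not deteriorate as $t\to0$ — the cubic upper regularity of $u_0$ off a small set is indispensable here — and, at the times $\tau_k$, one must verify simultaneously that the finer components $\nu_j$ ($j>k$) do not raise $v$ above the threshold on a large set, that the far annuli $B(x_0,(j+1)\sqrt{\tau_k})$ with $j$ large contribute a negligible tail, and, above all, that the ``lumps'' of the coarser components $\nu_j$ ($j\le k$) collectively cover a set of measure $\ll\eta$ near $\tau_k$. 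This last estimate is what forces the scale separation $1/\rho$ to be large, and it is the step at which all constants ($\kappa$, then $C_2$, then $\rho,s_0$, and the $\ell_k$) must be fixed, in that order. The elementary bounds on $\nu_k(B(x,r))$, the weak attainment of the initial datum, and the reduction of the general pair $(\lambda,\mu)$ to controlling $\liminf$ with $u_\lambda$ and $\limsup$ with $u_\mu$ separately are all routine.
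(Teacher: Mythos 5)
Your route is genuinely different from the paper's. The paper takes a single explicit Dirac-mass solution $U$ of the obstacle problem (Proposition~\ref{P.solproperties}, taken from Brezis--Friedman), proves that its positivity set grows like $\sqrt{6t\log(1/t)}$ and then goes extinct at a finite time $T^*$, and builds $u_0$ as a hierarchy of Dirac masses $\theta_n^3\delta_{x^{(n)}_{j_1,\dots,j_n}}$ with spacings chosen so the translates of the rescaled solutions $u_{\theta_n,\lambda}$ never overlap. The key observation there is that non-overlapping supports make the superposition an exact solution, so oscillation amounts to counting supports level by level at the good times $t_n=T_1\theta_n^2$ and bad times $\tilde t_n=\tilde T_1\theta_n^2$. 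You instead sandwich $u_\lambda$ between $(v-\lambda^3t)_+$ and $v$ where $v$ is the pure heat flow, convert the question into a potential-theoretic density estimate for $u_0$, and construct $u_0$ as a sum $\sum_k m_k\nu_k$ of uniform measures on unions of tiny intervals at geometric scales. You buy independence from the fine structure of the obstacle-problem free boundary (you use only Lemma~\ref{4lem:maxprinciple} and sub/supersolution comparison, not the explicit profile $U$), at the cost of a more delicate multi-scale density estimate with exceptional sets. Both arguments ultimately lean on the same $\sqrt{t\log}$ spreading of the heat kernel; in the paper it appears in the explicit $L(t)\sim\sqrt{6t\log(1/t)}$, while in your construction it surfaces as the $\sqrt{\rho\log(1/\rho)}$ width of the exceptional set around the lumps of $A_k$.

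One place you should be more explicit, since it is where the constants could collide. The ``cubic upper regularity off a small set'' cannot be taken with a fixed constant $\Theta$ independent of $\rho$: the uniform contribution from the fine scales $j>k$ already forces $u_0(B(x,r))\gtrsim C_2(\lambda_0+\kappa)^3\,\rho\,r^3$ at $r=\sqrt{\tau_k}$, while the lower bound at $\sigma_k$ needs $C_2$ bounded below by an absolute constant. So the ``off a small set'' density constant is genuinely $\Theta\sim\rho\,C_2$, and the upper bound only closes because $\rho$ can be taken small \emph{after} $C_2$ (and $\kappa$) are fixed; the same ordering governs the Gaussian suppression $e^{-c/\rho}\rho^{-3/2}$ of the resonant lumps of $A_k$ at time $\tau_k$. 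You gesture at this ordering ($\kappa\to C_2\to\rho,s_0\to\ell_k$) in the final paragraph, but the proposal would be materially stronger if it recorded the explicit form $\Theta\sim\rho C_2$ and verified that Lemma~\ref{4lem:maxprinciple} applies on a full backward cylinder, not just at the single point $(x_0,t_0)$, since the density of $u_0$ must be controlled uniformly for $x'\in B(x_0,\varrho)$ and $t'\in(\tau_k-\varrho^2,\tau_k)$. A second point worth stating, since the paper handles it through the Brezis--Friedman uniqueness theorem for Dirac data, is in what sense the solution with measure initial data attains that datum; your comparison $u_\lambda\ge(v-\lambda^3t)_+$ and $u_\lambda\le v$ implicitly needs this.

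Subject to these caveats, the argument is sound and yields the statement for all $\lambda,\mu$ in $[\lambda_0-\kappa,\lambda_0+\kappa]$ simultaneously, exactly as required.
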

	
	Before proceeding to the proof of this theorem, we first consider the problem \eqref{obst1}-\eqref{obst3} with initial data
	\begin{equation}\label{obst4}
		u_0 = \delta_0
	\end{equation}
	where $\delta_0$ denotes the Dirac distribution in $x=0$.
	
	The solution of this particular problem is the key building block to construct initial data $u_0$ as in Theorem \ref{main.ob} and Theorem \ref{4thm:main}.
	
	\begin{proposition}\label{P.solproperties}
		There exists a unique solution $U\in W^{2,1}_p\big(\R\times(\delta,\infty)\big)$ for all $1\leq p<\infty$, $\delta>0$ that satisfies \eqref{obst1} almost everywhere in $\R\times (0,\infty)$ and \eqref{obst4} in the sense of
		\begin{equation}
			\big|U(x,t)-\Phi(x,t)\big| \leq Ct\quad\text{ for all }x\in\R,\,t>0
			\label{5eq:initial:data}
		\end{equation}
		for some $C$ independent of $x$, where $\Phi$ denotes the heat kernel $\Phi(x,t)=\frac{1}{\sqrt{4 \pi t}}e^{-\frac{|x|^2}{4t}}$.
		
		Moreover the solution satisfies the following:
		\begin{enumerate}
			\item \label{it:first}  
			There exists $T^*>0$ such that $U(x,t)=0$ for any $t\geq T^*$ and for all $x\in \R$. 
			\item \label{it:second} 
			The solution $U(x,t)$ is symmetric, that is $U(x,t)=U(-x,t)$ for all $x\in \R$ and $t >0$.
			\item \label{it:third}
			The first derivative satisfies $\partial_{x} U(x,t) \leq 0$ for all $x\geq 0$ and $t>0$.
			\item \label{it:fourth}
			There exist two continuous function $\ell\colon(0,T^*)\rightarrow\R$ and $L\colon (0,T^*)\rightarrow\R$
			such that for each $t\in(0,T^*)$ we have 
			\begin{equation*}
				\left\{ |x| \leq\ell(t) \right\}  
				\subset\left\{U(x,t)>0\right\}  
				\subset\left\{|x|\leq L(t)\right\}.
			\end{equation*}
			Moreover, it holds
			\begin{equation*}
				\lim_{t\rightarrow0^{+}}\frac{\ell(t)  }{\sqrt{ 6t \ln\big (\frac{1}{t}\big )}}=\lim_{t\rightarrow0^{+}}\frac{L(t)		}{\sqrt{ 6t \ln\big (\frac{1}{t}\big )}}=1.
			\end{equation*}
		\end{enumerate}
	\end{proposition}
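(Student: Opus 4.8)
The plan is to obtain $U$ as a limit of solutions of the parabolic obstacle problem with smooth data approximating $\delta_0$, and to read off all four properties from the two–sided comparison $\Phi(\cdot,t)-t\le U\le\Phi(\cdot,t)$ together with the non–degeneracy Lemma~\ref{4lem:maxprinciple}. Concretely, I would pick nonnegative even $u_0^\eps\in C_c^\infty(\R)$, nonincreasing in $|x|$, with $\int_\R u_0^\eps=1$ and $u_0^\eps\rightharpoonup\delta_0$, and let $u^\eps$ solve the obstacle problem $\partial_t u^\eps-u^\eps_{xx}=-H(u^\eps)$, $u^\eps\ge0$, $u^\eps(\cdot,0)=u_0^\eps$ (e.g.\ as the limit of the penalised problems $\partial_t u^{\eps,\delta}-u^{\eps,\delta}_{xx}=-\gamma_\delta(u^{\eps,\delta})$ with $\gamma_\delta$ smooth, nondecreasing, approximating $H$; see \cite{F82}). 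Since $0\le H(u^\eps)\le1$, $u^\eps$ is a subsolution and $u^\eps+t$ a supersolution of the heat equation with datum $u_0^\eps$, so $\Phi(\cdot,t)*u_0^\eps-t\le u^\eps\le\Phi(\cdot,t)*u_0^\eps$; in particular $u^\eps(\cdot,t)\le(4\pi t)^{-1/2}$ uniformly in $\eps$, whence interior parabolic estimates bound $u^\eps$ in $W^{2,1}_{p,\loc}\cap C^1_{\loc}$ of $\R\times(0,\infty)$, a subsequence converges to $U$ solving \eqref{obst1} a.e., and passing to the limit in the comparison gives $|U-\Phi|\le t$, i.e.\ \eqref{5eq:initial:data}. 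Uniqueness among such solutions follows from the comparison principle for the obstacle problem: if $U_1,U_2$ are two of them, then $U_2+\|U_1(\cdot,\tau)-U_2(\cdot,\tau)\|_\infty$ is a supersolution on $\{t\ge\tau\}$, so $\|U_1(\cdot,t)-U_2(\cdot,t)\|_\infty\le\|U_1(\cdot,\tau)-U_2(\cdot,\tau)\|_\infty\le(C_1+C_2)\tau\to0$ as $\tau\downarrow0$.

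Property~\ref{it:second} is then immediate: $(x,t)\mapsto U(-x,t)$ solves the same problem and satisfies \eqref{5eq:initial:data} since $\Phi$ and $\delta_0$ are even, hence equals $U$. For Property~\ref{it:third} I would work with the smooth penalised solutions $u^{\eps,\delta}$, which are even by uniqueness: for $h>0$, the difference $w(x,t):=u^{\eps,\delta}(x,t)-u^{\eps,\delta}(x+h,t)$ satisfies on $\{x\ge-h/2\}$ a linear parabolic equation $\partial_t w-w_{xx}+cw=0$ with $c=\gamma_\delta'(\xi)\ge0$ (writing $\gamma_\delta(a)-\gamma_\delta(b)=\gamma_\delta'(\xi)(a-b)$), and $w\ge0$ on the parabolic boundary ($w=0$ at $x=-h/2$ by evenness, $w\to0$ as $x\to+\infty$, and $w\ge0$ at $t=0$ since $u_0^\eps$ is nonincreasing in $|x|$); the maximum principle gives $w\ge0$, hence $\partial_xu^{\eps,\delta}\le0$ on $\{x\ge0\}$, and letting $\delta,\eps\to0$, $\partial_xU\le0$ on $\{x\ge0\}$. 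Combined with the symmetry, this yields $\{U(\cdot,t)>0\}=(-R(t),R(t))$ with $R(t):=\sup\{x:U(x,t)>0\}$.

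For Property~\ref{it:first} I would apply Lemma~\ref{4lem:maxprinciple} with $\theta=1$ on $B(x_0,\tfrac12\sqrt{t_0})\times(\tfrac34t_0,t_0)$: there $0\le U\le\Phi\le(3\pi t_0)^{-1/2}\le\tfrac1{12}t_0=\tfrac13\varrho^2$ once $t_0\ge(12/\sqrt{3\pi})^{2/3}$, so $U(\cdot,t_0)\equiv0$; since $U(\cdot,t_1)\equiv0$ forces $U\equiv0$ for $t\ge t_1$ (uniqueness with zero datum) and $U(0,t)\ge(4\pi t)^{-1/2}-t>0$ for $t$ small, there is a finite $T^*>0$ with $U(\cdot,t)\equiv0$ exactly for $t\ge T^*$. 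For Property~\ref{it:fourth}, put $\ell(t):=\sqrt{6t\ln\tfrac1t-2t\ln(8\pi)}$ for $t$ small: for $|x|\le\ell(t)$ one computes $\Phi(x,t)\ge\Phi(\ell(t),t)=\sqrt2\,t$, so $U(x,t)\ge\Phi(x,t)-t\ge(\sqrt2-1)t>0$, giving $\{|x|\le\ell(t)\}\subset\{U(\cdot,t)>0\}$. For the outer bound, Lemma~\ref{4lem:maxprinciple} with $\theta=1$, $\varrho=\tfrac12\sqrt{t_0}$ and $U\le\Phi$ gives $U(x_0,t_0)=0$ whenever $(3\pi t_0)^{-1/2}\exp\!\big(-(|x_0|-\varrho)^2/(4t_0)\big)\le\tfrac13\varrho^2$; solving for $|x_0|$ produces the threshold $L(t_0):=\tfrac12\sqrt{t_0}+\sqrt{6t_0\ln\tfrac1{t_0}+C\,t_0}$ with an explicit $C$, so $\{U(\cdot,t)>0\}\subset\{|x|\le L(t)\}$ for $t$ small. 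Both $\ell(t)$ and $L(t)$ equal $\sqrt{6t\ln(1/t)}\,(1+o(1))$ as $t\to0^+$, and $\ell(t)<R(t)\le L(t)$ there; using the lower, resp.\ upper, semicontinuity of the inner, resp.\ outer, radius of $\{U(\cdot,t)>0\}$ one extends $\ell,L$ to continuous functions on all of $(0,T^*)$ preserving the two inclusions, completing Property~\ref{it:fourth}.

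The hard part will be the outer asymptotic bound: the cylinder radius in the non–degeneracy lemma must be chosen (here $\varrho\asymp\sqrt{t_0}$, small compared with $|x_0|\asymp\sqrt{t_0\ln(1/t_0)}$) so that the supremum of the heat kernel over the backward cylinder at a far point is still dominated by $\tfrac13\varrho^2$ exactly when $|x_0|\gtrsim\sqrt{6t_0\ln(1/t_0)}$, with the sharp constant $6$ and with errors of lower order that do not spoil the limit; by contrast the inner bound is read off directly from $U\ge\Phi-t$. A minor additional point is upgrading the near–$t=0$ estimates to globally continuous sandwiching functions $\ell,L$ on $(0,T^*)$, which uses only the semicontinuity (or the standard free–boundary regularity) of $R$.
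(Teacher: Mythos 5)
Your proposal is correct, and for the central item~(4) it is essentially the paper's argument: the inner radius comes from the subsolution $\Phi(\cdot,t)-t$ (valid because $H(U)\le 1$), the outer radius from $U\le\Phi$ combined with the non-degeneracy Lemma~\ref{4lem:maxprinciple} applied on a backward cylinder of radius $\varrho\asymp\sqrt{t_0}$, and both give the threshold $\sqrt{6t\ln(1/t)}$ up to lower-order terms. The secondary items are handled by different but equally valid means. The paper simply cites Br\'ezis--Friedman \cite{BF76} for existence, uniqueness, the interpretation of the initial condition, and the finite extinction time (Theorems~2.1 and 3.1 there), whereas you reconstruct $U$ by smoothing $\delta_0$ and passing to the limit (which gives the two-sided bound $\Phi-t\le U\le\Phi$ and hence \eqref{5eq:initial:data} with $C=1$), prove uniqueness by a direct comparison argument, and obtain extinction by applying Lemma~\ref{4lem:maxprinciple} once $t_0$ is large; your route is longer but self-contained. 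For item~(3) the paper appeals to the energy argument used in Lemma~\ref{L.1} (multiplying by $u''$ and integrating over $\{u'>0\}$), while you run a translation/comparison argument on the penalized problem exploiting evenness, which is the classical Friedman-type technique and is arguably more elementary. Item~(2) is the same observation in both. Your closing remark that the explicit small-time formulas for $\ell,L$ need to be extended to continuous functions on all of $(0,T^*)$ is a fair technical point, which the paper likewise leaves implicit.
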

	
	\begin{proof}
		Existence and uniqueness follows from \cite[Theorem 2.1]{BF76}.
		
		We proceed to the proof of the additional properties.
		\begin{enumerate}
			\item Item \ref{it:first} follows from \cite[Theorem 3.1]{BF76}.
			\item Item \ref{it:second} follows from the invariance of the equation and the initial data under the transformation $x \mapsto -x$.
			\item The proof of item \ref{it:third} is analogous to the one of Lemma \ref{L.1} and we omit it here.
			\item Left-hand side inclusion in \ref{it:fourth}:  
			Since $H(u) \leq 1$ the function $(x,t)\mapsto \Phi(x,t)-t$ is a subsolution and we deduce from the maximum principle that
			\begin{equation*}
				U(x,t) \geq \frac{1}{2\sqrt{\pi t}}e^{-\frac{|x|^2}{4t}}-t \quad\text{ for all }x\in \R,\,t>0. 
			\end{equation*}
			This implies in particular that $\bigg \{\frac{1}{2\sqrt{\pi t}}e^{-\frac{|x|^2}{4t}}>t \bigg \} \subset \{ U(x,t)>0\}\;$.
			Then it is easily calculated that $\frac{1}{2\sqrt{\pi t}}e^{-\frac{|x|^2}{4t}}>t$ is equivalent to
			$ |x|^2 < 6t \ln(\frac{1}{(4\pi)^{1/3}t})$ and the claim follows.
			\item Right-hand side inclusion in \ref{it:fourth}:  
			By comparison principle we deduce
			\begin{equation}
				U(x,t) \leq \Phi(x,t) =\frac{1}{\sqrt{4 \pi t}}e^{-\frac{|x|^2}{4t}}\quad\text{ for all }x\in\R,\,t>0.
				\label{4eq:uleqPhi}
			\end{equation}
			For some $(x_0,t_0) \in [0,\infty)\times (0,\infty)$ and $\varrho>0$ that we specify below in \eqref{4eq:4.8a} we consider the rectangle
			\begin{equation}\label{rectangle}
				\mathcal R:=[x_0-\varrho,x_0+\varrho]\times[t_0-\varrho^2,t_0]\;.
			\end{equation}
			Exploiting \eqref{4eq:uleqPhi}, we obtain that
			\begin{align}
				U(x,t) \leq  \frac{1}{2\sqrt{\pi (t_0-\varrho^2)}}e^{-\frac{|x_0-\varrho|^2}{4t_0}} \quad \text{ for all } (x,t) \in \mathcal R\;.
				\label{C3}
			\end{align}
			If
			\begin{equation}
				|x_0-\varrho|^2\geq 12\alpha\varrho^2 (-\ln\varrho)
				\quad\text{ and }\quad t_0=\alpha\varrho^2
				\label{4eq:4.8a}
			\end{equation}
			for some $\alpha>1$ to be chosen below, we deduce from \eqref{C3} that for all $(x,t) \in \mathcal R$
			\begin{align*}
				U(x,t) \leq  \frac{1}{\sqrt{4\pi(\alpha -1)}}\varrho^2.
			\end{align*}
			Choosing $\alpha>9\{1,T^*\}$ sufficiently large Lemma \ref{4lem:maxprinciple} yields that $U(x_0,t_0)=0$. 
			Hence, $U(x,t)=0$ for all $x\in\R$, $t>0$ with
			\begin{equation*}
				|x| \geq \sqrt{6t(-\ln t) + 6t\ln\alpha} +\sqrt{\frac{t}{\alpha}}\,=:\,L(t)\,.
			\end{equation*}
			This implies the second inclusion in \ref{it:fourth}.
		\end{enumerate}
	\end{proof}
	
	\begin{figure}
		\centering
		\includegraphics[width=0.5\linewidth]{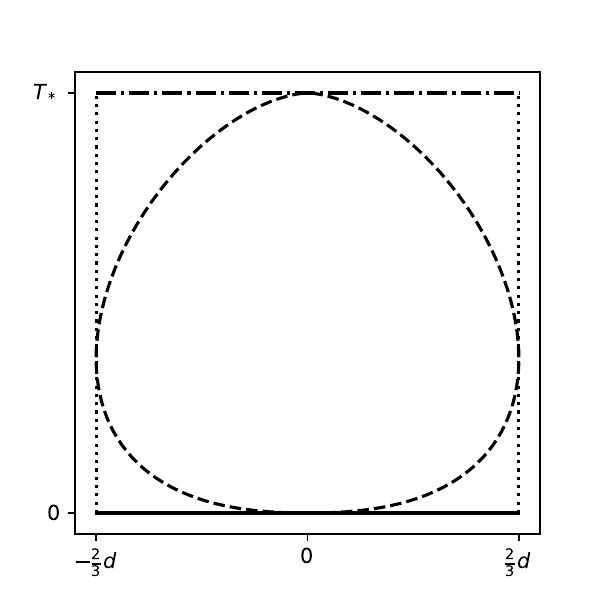}
		\caption{Main building block of the construction. The boundary of the support of $U$ is indicated by the oval line.}
	\end{figure}
	
	\begin{remark}
		\label{4rem:U-lambda}
		Let $U$ denote the solution of \eqref{obst1}, \eqref{obst4} and let for $\lambda>0$ fixed
		\begin{equation*}
			U_\lambda(x,t)=\lambda U(\lambda x,\lambda^2t),\,x\in\R,t>0.
		\end{equation*}
		Then $U_\lambda$ satisfies \eqref{4eq:obst1-lambda}, $U(\cdot,0)=\delta_0$ and
		\begin{equation*}
			\left\{ |x| \leq\ell_\lambda(t) \right\}  
			\subset\left\{U_\lambda(x,t)>0\right\}  
			\subset\left\{|x|\leq L_\lambda(t)\right\}.
		\end{equation*}
		with $\ell_\lambda(t)=\frac{1}{\lambda}\ell(\lambda^2t)$, $L_\lambda(t)=\frac{1}{\lambda}L(\lambda^2t)$.
		In particular
		\begin{equation*}
			\lim_{t\rightarrow0^{+}}\frac{\ell_\lambda(t)}{\sqrt{ 6t \ln\big (\frac{1}{t}\big )}}
			=\lim_{t\rightarrow0^{+}}\frac{\ell_\lambda(t)}{\sqrt{ 6t \ln\big (\frac{1}{\lambda^2t}\big )}}
			=1=\lim_{t\rightarrow0^{+}}\frac{L_\lambda(t)}{\sqrt{ 6t \ln\big (\frac{1}{\lambda^2t}\big )}}
			=\lim_{t\rightarrow0^{+}}\frac{L_\lambda(t)}{\sqrt{ 6t \ln\big (\frac{1}{t}\big )}}.
		\end{equation*}
	\end{remark}
	
	\begin{proof}[Proof of Theorem \ref{4thm:main}]
		Let $U$ denote the solution of \eqref{obst1}, \eqref{obst4}.
		By a scaling argument it is sufficient to consider the case $\lambda_0=1$.
		Let $\eta>0$ be arbitrarily prescribed.
		
		We are going to construct a solution $u$ of \eqref{obst1} with oscillatory support such $u(\cdot,0)$ is supported in the interval $(0,\eta)$.
		We will use an iterative procedure with $U$ as main building block.
		
		Let 
		\begin{equation}\label{defn-d}
			d:=\frac{3}{2}\sup_{t >0} \{ x>0\,|\, U(x,t)>0\}.
		\end{equation}
		By Remark \ref{4rem:U-lambda} the support of the solution $U_\lambda$ of \eqref{4eq:obst1-lambda}, \eqref{obst4} satisfies
		\begin{equation}
			\{U_\lambda(\cdot,t)>0\} \subset (-d,d) \quad\text{ for all }t>0,\,\lambda> \frac{2}{3}.
			\label{4eq:d-support}
		\end{equation}
		Increasing the value of $T^*$ from Proposition \ref{P.solproperties} item \ref{it:first} by a factor $\frac{9}{4}$ we have
		\begin{equation*}
			U_\lambda(\cdot,t)=0 \quad\text{ for all }t\geq T^*,\,\lambda>\frac{2}{3}.
		\end{equation*}
		Next, for a parameter $\theta \in (0,1)$ we define the rescaled solution
		\begin{equation}\label{scaled-sol}
			u_{\theta,\lambda}(x,t):= \lambda\theta^2 U \Big ( \frac{\lambda x}{\theta}, \frac{\lambda^2t}{\theta^2}\Big)
			=\theta^2U_\lambda(\frac{x}{\theta}, \frac{t}{\theta^2})\,.
		\end{equation}
		Furthermore we deduce from Proposition \ref{P.solproperties} and Remark \ref{4rem:U-lambda} that there exist constants $C_1>0$ and $0<\kappa_0<\frac{1}{3}$ such that
		\begin{equation}
			\{U_\lambda(\cdot,t)>0\} \subset \Big( -C_1 \sqrt{t \ln  t^{-1}},C_1 \sqrt{t \ln  t^{-1}} \Big)
			\quad\text{ for all }0<t<\frac{1}{2},\,|1-\lambda|<\kappa_0\,.
			\label{4eq:15a}
		\end{equation}
		This implies in particular that for any $x_0\in\R$, $0<t<\frac{1}{2}$, $|1-\lambda|<\kappa_0$
		\begin{equation}\label{uthetazero}
			u_{\theta,\lambda}(x-x_0,t)= 0 \qquad
			\mbox{ if } \quad \frac{|x-x_0|}{\theta} \geq C_1\sqrt{ \frac{t}{\theta^2} \ln \frac{\theta^2}{t}}\,, \quad
			\mbox{ that is if} \quad |x-x_0| \geq C_1 \sqrt{t \ln \frac{\theta^2}{t}}\,.
		\end{equation}
		
		We are now going to define iteratively a set of points that represents the atoms of the measure that we choose as initial datum.
		
		Consider a sequence $(\theta_n)_{n\in\N}$ that converges strictly monotone to zero and that we will specify later.
		
		In a first step we iteratively define sets of points $x^{(n)}_{j_1,\cdots,j_n}$, $n\in\N$, via
		\begin{align*}
			x^{(1)}_{j_1} & = 4 d \theta_1 j_1 \,, \qquad \qquad
			j_1 \in I_1:=\Big\{1,2,\cdots,\Big \lfloor \frac{1}{4 d \theta_1}\Big \rfloor -1\Big \}, \\
			x^{(2)}_{j_1,j_2} & = x^{(1)}_{j_1}+ 4 d \theta_2 j_2 \,, \qquad
			j_1\in I_1,\, j_2 \in I_2:=\Big \{1,2,\cdots,\Big \lfloor \frac{\theta_1}{\theta_2}\Big \rfloor -1\Big \}, \\
			\vdots & \qquad \vdots \\
			x^{(n)}_{j_1,j_2,\cdots,j_n} & = x^{(n-1)}_{j_1,\cdots,j_{n-1}}+ 4 d \theta_n j_n \,, \qquad
			j_1\in I_1,\dots,\,j_n \in I_n:=\Big \{1,2,\cdots,\Big \lfloor \frac{\theta_{n-1}}{\theta_{n}}\Big\rfloor  -1\Big \}.
		\end{align*}
		We note that the number $Z_n$ of points on the level $n$ is of the order $\big( 4 d \theta_n\big)^{-1}$.
		
		Next we consider index sets $I^*_n\subset I_n$, $n\in\N$ that we will specify later, and define a corresponding solution $u^{(n)}_\lambda$, $n\in\N$ of \eqref{obst1} by
		\begin{align}
			u^{(1)}_\lambda&= \sum_{j_1 \in I^*_1} u_{\theta_1,\lambda}^{j_1},&\qquad u_{\theta_1,\lambda}^{j_1}(x,t)&=u_{\theta_1,\lambda}\big(x-x^{(1)}_{j_1},t\big)\,,\notag \\
			u^{(2)}_\lambda&= \sum_{j_1 \in I^*_1}\sum_{j_2 \in I^*_2} u_{\theta_2,\lambda}^{j_1,j_2},&\qquad u_{\theta_2,\lambda}^{j_1,j_2}(x,t)&=u_{\theta_2,\lambda}\big(x-x^{(2)}_{j_1,j_2},t\big)\,,\notag \\
			\vdots & \qquad \vdots \notag\\
			u^{(n)}_\lambda &=  \sum_{j_1\in I^*_1} \sum_{j_2 \in I^*_2} \cdots \sum_{j_n \in I^*_n} u_{\theta_n,\lambda}^{j_1,\cdots,j_n},&\quad u_{\theta_n,\lambda}^{j_1,\cdots,j_n}(x,t)&=u_{\theta_n,\lambda}\big(x-x^{(n)}_{j_1,\cdots,j_n},t\big)\,.
			\label{n-supp-def}
		\end{align}
		Finally we define
		\begin{align}\label{udef}
			u_\lambda(x,t):=\sum_{n \in \N} u^{(n)}_\lambda(x,t)\,.
		\end{align}
		
		\begin{figure}
			\centering
			\includegraphics[width=0.6\linewidth]{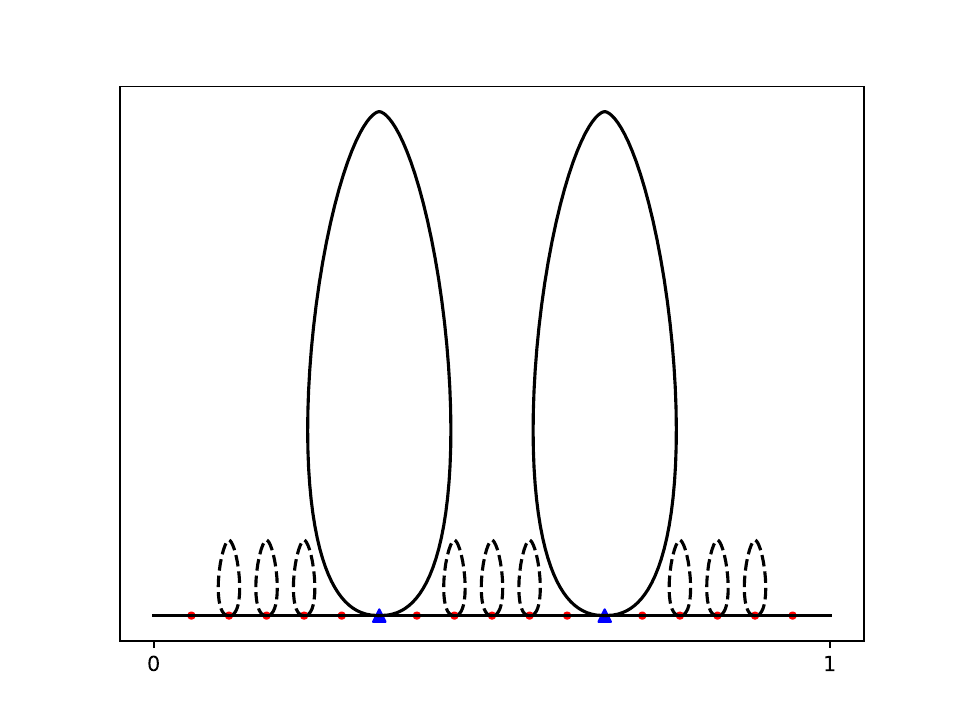}
			\caption{Points on the level $n$ (circles) and $n+1$ (triangles)}
		\end{figure}

		We will choose the index sets $I^*_n$ below such that the supports of the different building blocks do not overlap.
		This in particular implies that the infinite sum in \eqref{udef} converges and that $u_\lambda$ is a solution of \eqref{obst1} with initial datum
		\begin{equation*}
			u_0:=\sum_{n \in \N}\sum_{j_1\in I^*_1} \sum_{j_2 \in I^*_2} \cdots \sum_{j_n \in I^*_n} \theta_n^3\delta_{x^{(n)}_{j_1,\cdots,j_n}}.
		\end{equation*}
		We observe that $u_0$ is a finite Radon measure if
		\begin{equation}
			\sum_{n\in\N} (4d\theta_n)^{-1}\theta_n^3<\infty,\text{ which is guaranteed by }
			\sum_{n\in\N}\theta_n^2<\infty.
			\label{4eq:init-theatn}
		\end{equation}
		
		In the following we always assume $0<t<\frac{1}{2}$.
		We specify the choices of the index sets $I^*_n$ such that $u_\lambda$ is well-defined and such that the size of supports of $u_\lambda(\cdot,t)$ is oscillatory for $t\searrow 0$.
		
		First, we notice that due to the definition of $d$, see \eqref{4eq:d-support}, and the choice of the spacing of the $x^{(n)}$ the supports $u_{\theta_n,\lambda}^{j_1,\dots,j_n}$ do not overlap.
		
		We will now choose the index set $I^*_n\subset I_n$ such that the support of  $u_{\theta_n,\lambda}$ does not overlap with any of the previous levels.
		We therefore consider a point $x^{(n-1)}_{j_1,\dots,j_{n-1}}$ with $j_k\in I^*_k$ for $k=1,\dots,j-1$.
		In order to exclude that the support of $u_{\theta_n,\lambda}^{j_1,\cdots,j_n}$ overlaps with the support of $u_{\theta_{n-1},\lambda}^{j_1,\cdots,j_{n-1}}$ by \eqref{4eq:15a} it is sufficient to guarantee that
		\begin{equation*}
			4 d \theta_{n} j_n - d\theta_n > C_1 \sqrt{ T^* \theta_n^2 \ln \Big( \frac{\theta_{n-1}^2}{T^*\theta_n^2}\Big)},
		\end{equation*}
		that is
		\begin{equation*}
			j_n > \frac{1}{4} + \frac{C_1}{4d} \sqrt{ T^*  \ln \Big( \frac{\theta_{n-1}^2}{T^*\theta_n^2}\Big)}
		\end{equation*}
		We notice that this is ensured if
		\begin{equation}\label{cond-in}
			j_n > \frac{C_1}{2d} \sqrt{ T^*  \ln \Big( \frac{\theta_{n-1}^2}{T^*\theta_n^2}\Big)}
		\end{equation}
		and the sequence $(\theta_n)_n$ satisfies
		\begin{equation}\label{thetan-1}
			\frac{\theta_n}{\theta_{n-1}} < \frac{1}{\sqrt{T^*}}\exp \Big( - \frac{d^2}{2T^* C_1^2} \Big)=:C_2\,.
		\end{equation}
		Similarly, the conditions \eqref{thetan-1} and
		\begin{equation}\label{cond-in2}
			j_n < \frac{\theta_{n-1}}{\theta_{n}} - \frac{C_1}{2d} \sqrt{ T^*  \ln \Big( \frac{\theta_{n-1}^2}{T^*\theta_n^2}\Big)}
		\end{equation}
		exclude that the support of $u_{\theta_n}^{j_1,\cdots,j_n}$ overlaps with the support of $u_{\theta_{n-1}}^{j_1,\cdots,j_{n-1}+1}$.
		
		Therefore, we can choose $I^*_n \subset I_n$ with a number of indices 
		\begin{equation*}
			|I^*_n| \geq |I_n|-\frac{C_1}{d} \sqrt{ T^*  \ln \Big( \frac{\theta_{n-1}^2}{T^*\theta_n^2}\Big)}.
		\end{equation*}
		The number $Z_n^*$ of indices $(j_1,\dots,j_n)$ with $j_k\in I_k^*$ for all $k=1,\dots,n$ can thus be estimated from below by
		\begin{align*}
			Z_n^* &\geq \Big(\frac{1}{4d\theta_1}-2\Big)\cdot \left(\frac{\theta_1}{\theta_2}-2-\frac{C_1}{d} \sqrt{ T^*  \ln \Big( \frac{\theta_1^2}{T^*\theta_2^2}\Big)}\right)\ldots \cdot \left(\frac{\theta_{n-1}}{\theta_n}-2-\frac{C_1}{d} \sqrt{ T^*  \ln \Big( \frac{\theta_{n-1}^2}{T^*\theta_n^2}\Big)}\right)\\
			&\geq \frac{1}{4d\theta_n}\Pi_{j=1}^n(1-\eps_j),
		\end{align*}
		with
		\begin{equation*}
			\eps_1=8d\theta_1,\quad
			\eps_j=\frac{\theta_j}{\theta_{j-1}}\Big(2+\frac{C_1}{d} \sqrt{ T^*  \ln \Big( \frac{\theta_{j-1}^2}{T^*\theta_j^2}\Big)}\Big),\quad j\geq 2.
		\end{equation*}
		This yields
		\begin{equation}
			Z_n^* \geq \frac{1}{4d\theta_n}(1-\sum_{j=1}^n\eps_j)
			\geq \frac{2}{9d\theta_n} \quad\text{ if }\quad\sum_{j=1}^\infty \eps_j<\frac{1}{9}.
			\label{4eq:cond-sum-epsj}
		\end{equation}
		
		Next, we define a sequence of times along which the support of the solution oscillates. 
		By Proposition \ref{P.solproperties} there exists $0<T_1<T^*$ and $0<\kappa<\kappa_0$ such that
		\begin{equation}
			|\{U_\lambda(\cdot,T_1)>0\}|\geq l(T_1)=: d_1 > 0\quad\text{ for all }\lambda\in (1-\kappa,1+\kappa).
			\label{4eq:T1}
		\end{equation}
		We then define $t_n:=T_1\theta_n^2$, which implies for any $j_1,\dots,j_n$ with $j_k\in I_k^*$, $k=1,\dots,n$ that for all $|1-\lambda|<\kappa$
		\begin{equation}\label{tndef}
			\Big| \{ u^{(n)}_\lambda(\cdot,t_n) >0\} \cap \big [ x^{(n)}_{j_1,\cdots, j_n} - 2 d \theta_n, x^{(n)}_{j_1,\cdots,j_n} + 2 d \theta_n \big ]\Big| \geq d_1 \theta_n\,.
		\end{equation}
		Again by Proposition \ref{P.solproperties} we can choose $0<\tilde T_1\ll T_1$ such that 
		\begin{equation*}
			|\{U(\cdot,\tilde T_1)>0\}|\leq \frac{d_1}{8}\,,
		\end{equation*}
		hence for all $0<\kappa<\kappa_0$ sufficiently small
		\begin{equation}
			|\{U_\lambda(\cdot,\tilde T_1)>0\}|\leq \frac{d_1}{4}\quad\text{ for all }\lambda\in (1-\kappa,1+\kappa).
			\label{4eq:tildeT1}
		\end{equation}
		We define $\tilde t_n:=\tilde T_1\theta_n^2$, which implies for any $j_1,\dots,j_n$ with $j_k\in I_k$, $k-1,\dots,n$ and any $\lambda\in (1-\kappa,1+\kappa)$ that 
		\begin{equation} \label{ttildendef}
			\Big| \{ u^{(n)}_\lambda(x,{\tilde t}_n) >0\} \cap \big [ x^{(n)}_{j_1,\cdots, j_n} - 2 d \theta_n, x^{(n)}_{j_1,\cdots,j_n} + 2 d \theta_n \big ]\Big| \leq \frac{d_1}{4} \theta_n\,.
		\end{equation}
		We ensure that there is no overlap with the support of $u^{(k)}_\lambda(\cdot,\tilde t_n)=0$ for any $k\geq n+1$, which is guaranteed by
		\begin{equation}\label{thetan-2}
			T^*\theta_{n+1}^2 < \tilde t_n\,,\text{ thus by }\quad\frac{\theta_{n+1}}{\theta_n}< \sqrt{\frac{\tilde T_1}{T^*}}.
		\end{equation}
		We assume $\tilde T_1<1$, set $\theta_0=\frac{1}{4d}$ and estimate the overlap with the support of $u^{(k)}_\lambda(\cdot,\tilde t_n)=0$ for any $1\leq k\leq n-1$ and $|1-\lambda|<\kappa$ by
		\begin{align}
			\Big| \big\{u^{(k)}_\lambda(\cdot,\tilde t_n)>0\big\}\Big|
			&\leq \big|I_k^*\big|2C_1\sqrt{\tilde t_n\ln \Big(\frac{\theta_k^2}{\tilde t_n}\Big)}
			\nonumber\\
			&\leq \frac{\theta_{k-1}}{\theta_k}2C_1\theta_n \sqrt{\tilde T_1\ln \Big(\frac{\theta_k^2}{\tilde T_1\theta_n^2}\Big)}
			\nonumber\\
			&\leq 2C_1\theta_{k-1} \sqrt{\frac{\tilde T_1\theta_n^2}{\theta_k^2}\ln \Big(\frac{\theta_k^2}{\tilde T_1\theta_n^2}\Big)}
			\nonumber\\
			&\leq 2C_1\theta_0\sqrt{\frac{\tilde T_1\theta_n^2}{\theta_{n-1}^2}\ln \Big(\frac{\theta_{n-1}^2}{\tilde T_1\theta_n^2}\Big)}
			\nonumber\\
			&\leq 2C_1\theta_0\frac{\theta_n}{\theta_{n-1}}\sqrt{\tilde T_1\ln \Big(\frac{\theta_{n-1}^2}{\tilde T_1\theta_n^2}\Big)}.
			\label{4eq:sum-thetaj}
		\end{align}
		This yields
		\begin{align*}
			\Big| \big\{\sum_{k=1}^{n-1}u^{(k)}_\lambda(\cdot,\tilde t_n)>0\big\}\Big|
			&\leq 2C_1\theta_0\frac{n\theta_n}{\theta_{n-1}}\sqrt{\tilde T_1\ln \Big(\frac{\theta_{n-1}^2}{\tilde T_1\theta_n^2}\Big)}
			\quad\text{ for all }\lambda\in (1-\kappa,1+\kappa).
		\end{align*}
		Together with \eqref{ttildendef} we can estimate the size of support at $t=\tilde t_n$ from above by
		\begin{equation}
			\big| \{ u_\lambda(\cdot,{\tilde t}_n)>0\}\big | 
			\leq \frac{d_1}{4}\theta_n\Big\lfloor\frac{1}{4d\theta_n}\Big\rfloor + 2C_1\theta_0\frac{n\theta_n}{\theta_{n-1}}\sqrt{\tilde T_1\ln \Big(\frac{\theta_{n-1}^2}{\tilde T_1\theta_n^2}\Big)}
			\leq \frac{1}{9}\frac{d_1}{d}
			\label{4eq:est-tilde-tn}
		\end{equation}
		if
		\begin{equation}
			2C_1\theta_0\frac{n\theta_n}{\theta_{n-1}}\sqrt{\tilde T_1\ln \Big(\frac{\theta_{n-1}^2}{\tilde T_1\theta_n^2}\Big)} \leq \frac{1}{18}\frac{d_1}{d}\quad\text{ for all }n\in\N.
			\label{4eq:cond-tilde-tn}
		\end{equation}
		
		We next obtain a suitable bound from below for the support at time $t=t_n$.
		By \eqref{4eq:cond-sum-epsj}, \eqref{tndef} the support of $u^{(n)}_\lambda(\cdot,t_n)$ is estimated from below by
		\begin{equation}
			\big| \{ u^{(n)}_\lambda(\cdot,t_n)>0\}\big| \geq d_1\theta_n Z_n^*
			\geq d_1\theta_n \frac{2}{9d\theta_n} = \frac{2}{9}\frac{d_1}{d},
			\label{measure-n}
		\end{equation}	
		if the condition on $(\theta_n)_n$ in \eqref{4eq:cond-sum-epsj} holds.
		
		We remark, that all conditions \eqref{thetan-1}, \eqref{4eq:cond-sum-epsj}, \eqref{thetan-2} and \eqref{4eq:cond-tilde-tn} can be satisfied if we only choose the sequence $(\theta_n)_n$ with a sufficiently strong decay to zero.
		
		To summarize, for any $\lambda\in (1-\kappa,1+\kappa)$ we have constructed a solution $u_\lambda$ to the obstacle problem on the real line with support in $(0,1)$ and two sequences $t_n, \tilde t_n \to 0$ with 
		\begin{equation*}
			\limsup \limits_{t_n\to 0} \vert u_\lambda(x,t_{n})\vert \geq \frac{2}{9} \frac{d_1}{d}\quad \text{and} \quad \liminf \limits_{\tilde t_n\to 0} \vert u_\lambda(x,\tilde t_{n})\vert \leq \frac{1}{9}\frac{d_1}{d}\,.
		\end{equation*}
		This in particular proves Theorem \ref{4thm:main} with $\eta=1$.
		
		The construction shows that for any $\eta>0$ the property \eqref{4eq:oscillation} remains valid if we set $u_0$ to zero outside $(0,\eta)$.
	\end{proof}

	\section{Oscillatory solutions for a nonlocal obstacle problem on the real line}
	\label{sec:nonlocal-obs-R}
	
	In this section we construct oscillatory solutions with compact support to the nonlocal obstacle problem 
	\begin{align}
		\partial_t u - u'' &= -\Big(1-\frac{g}{\lambda}\Big)H(u)\quad&\text{ in }I\times (0,\infty),
		\label{5eq:ob1}\\
		u &\geq 0\quad&\text{ in }I\times (0,\infty), \label{5eq:ob2}\\
		g &\leq \lambda(t) \quad\text{ a.e.~in } \{u=0\}&\text{ for almost every }t>0,
		\label{5eq:ob3}\\
		\lambda(t) &= \fint_{\{u(\cdot,t)>0\}} g(x)\,dx \quad&\text{ for almost every }t>0,
		\label{5eq:ob4}\\
		u(\cdot,0) &= u_0\quad&\text{ in }I
		\label{5eq:ob5}
	\end{align}
	with $I=\R$.
	The solution $u(\cdot,t)$ will have compact support in $(-5,5)$ for all sufficiently small times and will be constructed by considering \eqref{5eq:ob1}-\eqref{5eq:ob5} with $I=(-5,5)$ and with a Neumann boundary condition
	\begin{equation}
		u'(\pm 5,t) = 0 \quad\text{ for all }t>0.
		\label{5eq:ob4a}
	\end{equation}
	For simplicity we choose a particular $g\in C^2([-5,5])$, satisfying $g_{\min}:=\frac{1}{12}\leq g\leq \frac{6}{7} =: g_{9}$ and
	\begin{align}
		g=g_{9}\,\text{ in }[-3,-2],\quad
		g = g_{\min}\,\text{ in } [-5,5]\setminus (-\frac{7}{2},-\frac{3}{2}).
	\end{align}
	We consider $u_0=u_0^-+u_0^+$ with $u_0^-\in C^2(-5,5)$, $\{u_0^->0\}=(-4,-1)$ and $u_0^+$ a finite nonnegative Radon measure supported in $(0,\eta)$ for some $\eta>0$ that will be chosen below as a rescaling of the initial datum constructed in Section \ref{sec:obsc-obs}.
	
	\begin{figure}
		\centering
		\includegraphics[width=0.6\linewidth]{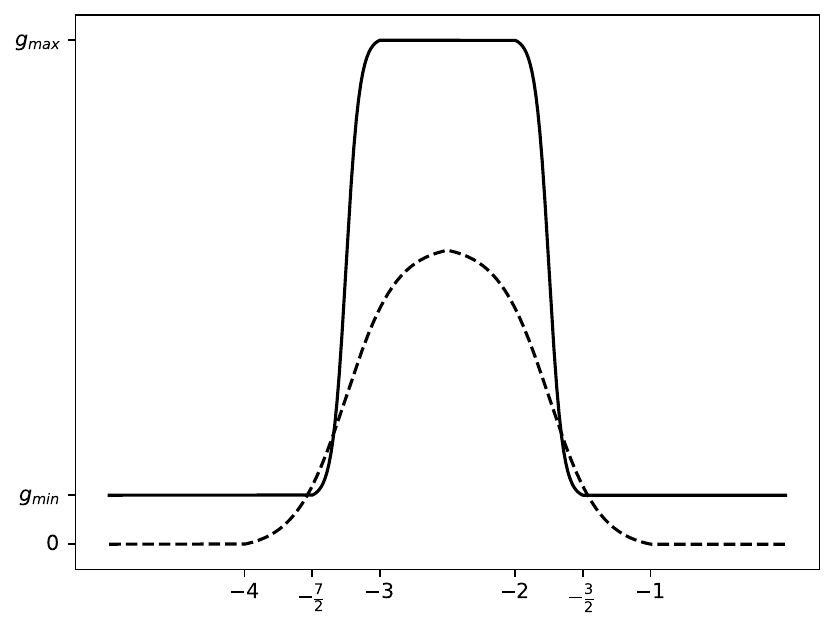}
		\caption{Graph of $g$ (solid) and of $u_0^-$ (dashed)}
	\end{figure}
	
	Our main result in this section is the following.
	\begin{theorem}\label{5thm:main}
		There exist initial data $u_0 \in {\cal M}_+\big((-5,5)\big)$ such that the solution to \eqref{5eq:ob1}-\eqref{5eq:ob5} with $I=\R$ satisfies
		\begin{equation}\label{5eq:oscillation}
			\liminf \limits_{t\to 0^+} |\{u(\cdot,t)>0\}| < \limsup\limits_{t\to 0^+} |\{u(\cdot,t)>0\}|
		\end{equation}
		and
		\begin{equation}\label{5eq:oscillation-lambda}
			\liminf \limits_{t\to 0^+} \lambda(t) < \limsup\limits_{t\to 0^+} \lambda(t)\,.
		\end{equation}
	\end{theorem}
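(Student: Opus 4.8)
We take $u_0=u_0^-+u_0^+$, where $u_0^-\in C^2((-5,5))$ has $\{u_0^->0\}=(-4,-1)$ and $u_0^-\ge c_0>0$ on a neighbourhood of $[-3,-2]$, and where $u_0^+$ is a small atomic measure supported in $(0,\eta)$ built from Theorem \ref{4thm:main}. For small $t>0$ the solution will split as $u=u^{\mathrm L}+u^{\mathrm R}$ with disjoint supports, $\{u^{\mathrm L}(\cdot,t)>0\}\subseteq[-4,-1]$ and $\{u^{\mathrm R}(\cdot,t)>0\}\subset(-\tfrac14,\tfrac14)$, the two pieces being coupled only through the scalar $\lambda(t)$. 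The decisive point is \emph{unconditional}: since $g\equiv g_9$ on $[-3,-2]$ and $g/\lambda>0$ gives $\partial_t u-u''=-(1-g/\lambda(t))H(u)\ge-1$ wherever $u>0$, the function $w:=u+t$ is a supersolution of the heat equation; comparing $w$ with the heat flow of $c_0\,\mathbf 1_{\{u_0^-\ge c_0\}}$ shows $u(\cdot,t)>0$ on $[-3,-2]$ for all $t\in[0,t_1]$ for some $t_1>0$, whatever $\lambda$ does. Hence $\{u(\cdot,t)>0\}\supseteq[-3,-2]$, and since $g\ge g_{\min}$ on $(-5,5)$ and $|\{u(\cdot,t)>0\}|\le 10$,
\[
\lambda(t)=\fint_{\{u(\cdot,t)>0\}}g\ \ge\ g_{\min}+\frac{g_9-g_{\min}}{10}\ =\ \frac{9}{56}\ =:c_1\ >\ g_{\min}\qquad(0\le t\le t_1).
\]
Because $g\equiv g_{\min}$ on a neighbourhood of $\{-4,-1\}$ and on the support region of $u^{\mathrm R}$, the coefficient in \eqref{5eq:ob1} there equals $1-g_{\min}/\lambda(t)$, which therefore stays in a fixed compact subinterval of $(\tfrac{8}{27},1)$ for $0\le t\le t_1$, again with no hypothesis on $\lambda$.

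\textbf{The two pieces.} Set $\lambda_0:=\fint_{(-4,-1)}g$, so that $g_{\min}<\lambda_0<g_9<1$ (and, since $(-4,-1)\supseteq[-3,-2]$, in fact $\lambda_0$ is bounded away from $g_{\min}$), and put $\Lambda_0:=(1-g_{\min}/\lambda_0)^{1/3}\in(\tfrac23,1)$. Apply Theorem \ref{4thm:main} with $\Lambda_0$ in the role of $\lambda_0$ to obtain $\kappa>0$ — shrunk if necessary so that $[\Lambda_0-\kappa,\Lambda_0+\kappa]\subset(\tfrac23,1)$ — and, for any $\eta\in(0,1)$, an atomic measure $\nu_\eta$ supported in $(0,\eta)$; set $u_0^+:=\nu_\eta$, and fix $\kappa_0>0$ so small that $\lambda\in[\lambda_0-\kappa_0,\lambda_0+\kappa_0]$ forces $(1-g_{\min}/\lambda)^{1/3}\in[\Lambda_0-\kappa,\Lambda_0+\kappa]$. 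For the left part, $u_0^-$ satisfies $g-\lambda_0=g_{\min}-\lambda_0<0$ on $\{u_0=0\}$ and $\mathcal{H}^1(\partial\{u_0^->0\})=0$, and near its free boundary the coefficient of $u^{\mathrm L}$ stays $\ge 1-g_{\min}/c_1>0$; hence the one-dimensional, local version of the right-continuity analysis of \cite[Section 4]{LNRV23} (see also \cite{BF76}) shows that $L(t):=\{u^{\mathrm L}(\cdot,t)>0\}$ is an interval with $|L(t)|\to 3$ and $\fint_{L(t)}g\to\lambda_0$ as $t\to0^+$, and is continuous for $t>0$. For the right part, Proposition \ref{P.solproperties} and Remark \ref{4rem:U-lambda} show that the rescaled single-atom building blocks of Theorem \ref{4thm:main} have, uniformly for the parameter $\Lambda$ in any compact subset of $(\tfrac23,1)$, supports contained in $(-\tfrac14,\tfrac14)$ and total measure $\le M''$ for $0<t<\tfrac12$, with $M''=O(\eta)$; by sub-additivity of the obstacle problem $u^{\mathrm R}\le\sum_n R_n$, where $R_n$ is the solution started from the single atom of $\nu_\eta$ at $x^{(n)}$ with the time-varying coefficient, which in turn is dominated by the corresponding $\Lambda_{\min}$-block with $\Lambda_{\min}:=\inf_{t\le t_1}\Lambda(t)>\tfrac23$; thus $|\{u^{\mathrm R}(\cdot,t)>0\}|\le M''=O(\eta)$ for $t\le t_1$, unconditionally. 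Recall finally that $\nu_\eta$ comes with times $t_n\downarrow0$, $\tilde t_n\downarrow0$ and numbers $0<M_-<M_+$ (of order $\eta$) such that $|\{u_\Lambda(\cdot,t_n)>0\}|\ge M_+$ and $|\{u_\Lambda(\cdot,\tilde t_n)>0\}|\le M_-$ for every fixed $\Lambda\in[\Lambda_0-\kappa,\Lambda_0+\kappa]$.

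\textbf{Closing the loop and concluding.} In the gap between the two supports $g=g_{\min}<\lambda(t)$ forces $u\equiv0$, so $\{u(\cdot,t)>0\}$ is the disjoint union of $L(t)$ and $\{u^{\mathrm R}(\cdot,t)>0\}$ and
\[
\lambda(t)=\frac{|L(t)|\,\fint_{L(t)}g+|\{u^{\mathrm R}(\cdot,t)>0\}|\,g_{\min}}{|L(t)|+|\{u^{\mathrm R}(\cdot,t)>0\}|}\qquad(0\le t\le t_1).
\]
Using $|L(t)|\to3$, $\fint_{L(t)}g\to\lambda_0$ and $|\{u^{\mathrm R}(\cdot,t)>0\}|\le M''=O(\eta)$ we get $|\lambda(t)-\lambda_0|\le o(1)+CM''$ as $t\to0^+$; choosing $\eta$ small and then $t_\star\in(0,t_1]$ small we conclude $\lambda(t)\in[\lambda_0-\kappa_0,\lambda_0+\kappa_0]$, hence $\Lambda(t):=(1-g_{\min}/\lambda(t))^{1/3}\in[\Lambda_0-\kappa,\Lambda_0+\kappa]$, for all $0\le t\le t_\star$. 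Now Theorem \ref{4thm:main} applies to $u_{\Lambda_{\min}}$ and to $u_{\Lambda_{\max}}$ with $\Lambda_{\max}:=\sup_{t\le t_\star}\Lambda(t)$, both in $[\Lambda_0-\kappa,\Lambda_0+\kappa]$, and a comparison principle sandwiches $u_{\Lambda_{\max}}\le u^{\mathrm R}\le u_{\Lambda_{\min}}$ on $(-\tfrac14,\tfrac14)\times(0,t_\star]$, so that $|\{u^{\mathrm R}(\cdot,t_n)>0\}|\ge M_+$ and $|\{u^{\mathrm R}(\cdot,\tilde t_n)>0\}|\le M_-$. Together with $|L(t)|\to3$ this gives $\liminf_{t\to0^+}|\{u(\cdot,t)>0\}|\le 3+M_-<3+M_+\le\limsup_{t\to0^+}|\{u(\cdot,t)>0\}|$, i.e.\ \eqref{5eq:oscillation}; and inserting the two regimes into the displayed formula and using that $m\mapsto\tfrac{3\lambda_0+m g_{\min}}{3+m}$ is strictly decreasing (because $g_{\min}<\lambda_0$) yields $\limsup_{t\to0^+}\lambda(t)\ge\tfrac{3\lambda_0+M_- g_{\min}}{3+M_-}>\tfrac{3\lambda_0+M_+ g_{\min}}{3+M_+}\ge\liminf_{t\to0^+}\lambda(t)$, i.e.\ \eqref{5eq:oscillation-lambda}. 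Existence and uniqueness of $u$, and the fact that it vanishes near $\pm5$ and hence solves the problem on $I=\R$, follow by adapting \cite{LNRV21} together with the well-posedness of the local obstacle problem for Radon-measure data (cf.\ \cite{BF76} and Proposition \ref{P.solproperties}) and parabolic smoothing for $t>0$.

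\textbf{Main obstacle.} The conceptual heart is the first paragraph: arranging, via the shape of $g$ on $[-3,-2]$ and the choice of $u_0^-$, that $\lambda(t)$ is pinned away from $g_{\min}$ \emph{with no a priori control on $\lambda$}. This decouples the two pieces and breaks the apparent circularity — the window $[\Lambda_0-\kappa,\Lambda_0+\kappa]$ is what makes Theorem \ref{4thm:main} applicable, yet $\lambda$ is itself an output of the coupled problem. The remaining work — the uniform-in-$\Lambda$ smallness of the (genuinely fattening) right part via sub-additivity of the obstacle problem and the scaling estimates of Proposition \ref{P.solproperties} and Remark \ref{4rem:U-lambda}, and the local one-dimensional version of the non-fattening analysis of \cite{LNRV23} for the nice left part — is technical but routine.
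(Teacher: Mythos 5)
Your proposal is correct and follows essentially the same route as the paper: decompose $u_0=u_0^-+u_0^+$, pin $\lambda(t)$ away from $g_{\min}$ unconditionally via the plateau of $g$ and the mass of $u_0^-$ near $[-3,-2]$, split into a left piece handled by \cite{BF76,LNRV23} and a right piece sandwiched using Theorem~\ref{4thm:main}, and compute the $\lambda$-oscillation from the disjointness of the two supports. Two details of your write-up are worth highlighting. First, you feed $\Lambda_0:=(1-g_{\min}/\lambda_0^-)^{1/3}$ into Theorem~\ref{4thm:main}, which correctly matches the coefficient $\lambda^3$ in \eqref{4eq:obst1-lambda} to the reaction rate $f(t)=1-g_{\min}/\lambda(t)$ of \eqref{5eq:tilde-u:1r}; the paper instead writes ``$\lambda_0:=f^+$'' and then ``$u_\mu\le u_r\le u_\lambda$'' with $\mu<\lambda$, which, read literally, has both the cube and the direction of the sandwich reversed (a larger $\lambda$ gives a larger coefficient $\lambda^3$ and therefore a \emph{smaller} $u_\lambda$, and it is $\lambda^3$, not $\lambda$, that must bracket $f(t)$). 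The intended content is exactly what you wrote, and the final inequality chain survives because Theorem~\ref{4thm:main} is symmetric in its two parameters, but your version is the clean one. Second, you obtain the unconditional positivity on $[-3,-2]$ by the elementary observation that $w=u+t$ is a heat supersolution and comparing with the heat flow of $c_0\mathbf{1}$, whereas the paper (Lemma~\ref{5lem:rough}) relies on the nonlocal comparison $u\ge u^-$, $\lambda\le\lambda^-$ and the support continuity of $u^-$ from \cite{LNRV23}; both are valid, yours is more self-contained at this step. The remaining machinery — the finer estimates, the split at $x=-\tfrac12$, the sandwich by $u_{\Lambda_{\min}},u_{\Lambda_{\max}}$, and the $\lambda$-oscillation computation using the strict monotonicity of $m\mapsto(3\lambda_0^-+mg_{\min})/(3+m)$ — matches Lemma~\ref{5lem:finer} and the paper's concluding argument.
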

	
	We will prove the theorem below.
	
	\begin{remark}[Original system]
		The construction made in this Section depends only on local properties of parabolic equations. 
		Therefore, we think that a completely analogous construction can be made to obtain an axisymmetric solution to the original model on the sphere with an oscillatory behavior, that means an initial datum $u_0$ on $(-1,1)$ such that the solution $u$ of \eqref{3eq:ob2}-\eqref{3eq:ob5} satisfies \eqref{5eq:oscillation} and \eqref{5eq:oscillation-lambda}.
	\end{remark}
	
	In the remainder of this section we prove Theorem \ref{5eq:oscillation}.
	
	\begin{lemma}[Rough estimates]
		\label{5lem:rough}
		Let $u$ denote the solution of \eqref{5eq:ob1}-\eqref{5eq:ob5}, \eqref{5eq:ob4a} with $I=(-5,5)$.
		There exists $T>0$ such that for all $0\leq t\leq T$
		\begin{align}
			u(\cdot,t) &>0 \quad\text{ in } \big[-\frac{7}{2},-\frac{3}{2}\big],
			\label{5eq:lemu-}\\
			\frac{11}{120}\leq \lambda(t) &\leq \frac{1}{2}\int_{-4}^{-1}g.
			\label{5eq:lemu-lambda}
		\end{align}
		Moreover, we have
		\begin{equation}
			1-\frac{g}{\lambda} \geq \frac{1}{11}\quad\text{ in }(-5,5)\setminus \big[-\frac{7}{2},-\frac{3}{2}\big].
			\label{5eq:lemu-rhs}
		\end{equation}
	\end{lemma}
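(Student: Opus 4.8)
The plan is to prove the three assertions of Lemma~\ref{5lem:rough} essentially independently, using only local parabolic comparison and the structure of the chosen data $g,u_0^-$.

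\textbf{Step 1: positivity of $u$ near $[-\tfrac72,-\tfrac32]$.} On the region where $g=g_9$, i.e.\ in $[-3,-2]$, the right-hand side of \eqref{5eq:ob1} can be controlled once we know $\lambda(t)$ is bounded away from $g_9$ from below; but more robustly I would argue directly. Since $u_0^-\in C^2$ with $\{u_0^->0\}=(-4,-1)$, on a slightly smaller interval $[-\tfrac72,-\tfrac32]$ we have $u_0^-\geq c_0>0$. The source term $-(1-g/\lambda)H(u)$ is bounded in $L^\infty$ (as soon as $\lambda$ is bounded below, which follows from mass conservation and $g\geq g_{\min}$, so $\lambda\geq g_{\min}=\tfrac1{12}$ always), hence $|{-}(1-g/\lambda)H(u)|\leq C$. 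Comparing $u$ with the solution $\underline u$ of the heat equation $\partial_t\underline u-\underline u''=-C$ with the same initial data and, say, zero Dirichlet data at $\pm 4$ (legitimate since $u\geq 0$ there), continuity gives $\underline u>0$ on $[-\tfrac72,-\tfrac32]$ for $t\in[0,T]$ for $T$ small; therefore $u>0$ there as well. This yields \eqref{5eq:lemu-}.

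\textbf{Step 2: the bounds on $\lambda(t)$.} The upper bound: by \eqref{5eq:ob4}, $\lambda(t)=\fint_{\{u(\cdot,t)>0\}}g$. Mass conservation forces $\int u(\cdot,t)=\int u_0<\infty$; combined with the local parabolic smoothing, the positivity set cannot be too small, but for the upper bound one instead notes that $u_0^+$ has vanishing mass as $\theta_n\to0$ contributions accumulate only near $0$, while the ``large'' part of the support comes from the $u_0^-$ block whose support is $(-4,-1)$; since $g=g_{\min}$ on most of $(-4,-1)$, one estimates $\int_{\{u>0\}}g\leq \int_{-4}^{-1}g + (\text{small})$ and $|\{u>0\}|\geq$ a definite fraction, giving $\lambda(t)\leq \tfrac12\int_{-4}^{-1}g$ for $T$ small. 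The precise bookkeeping here — showing $|\{u(\cdot,t)>0\}|\geq 2$ roughly, so that the average is at most half the integral over $(-4,-1)$ plus a negligible error — is the fiddly part. The lower bound $\lambda\geq\tfrac{11}{120}$ follows from $g\geq g_{\min}=\tfrac1{12}$ together with the fact that, by Step~1, the support contains $[-\tfrac72,-\tfrac32]$ on which $g=g_9=\tfrac67$; averaging $g$ over a set of measure $\leq 10$ that contains an interval of length $2$ on which $g=\tfrac67$ gives $\lambda\geq\tfrac{1}{10}\big(2\cdot\tfrac67 + 8\cdot\tfrac1{12}\big)=\tfrac{1}{10}\big(\tfrac{12}{7}+\tfrac23\big)$, which one checks is $\geq\tfrac{11}{120}$.

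\textbf{Step 3: the pointwise bound on $1-g/\lambda$.} On $(-5,5)\setminus[-\tfrac72,-\tfrac32]$ we have $g=g_{\min}=\tfrac1{12}$ by the definition of $g$. Using the upper bound on $\lambda$ from Step~2, $\lambda(t)\leq\tfrac12\int_{-4}^{-1}g\leq\tfrac12\cdot 3\cdot g_9=\tfrac12\cdot 3\cdot\tfrac67=\tfrac97$, so
\[
1-\frac{g}{\lambda}\;\geq\;1-\frac{g_{\min}}{\,\tfrac1{12}\big/?\,}\,,
\]
more carefully: $g/\lambda = (1/12)/\lambda$ and since $\lambda\geq \tfrac{11}{120}$ is \emph{not} what we want (we need $\lambda$ not too small here — wrong direction), instead use that $\lambda\geq g_{\min}$ is false in general; rather $\lambda$ is an average of $g$ which on most of its domain equals $g_{\min}$, hence $\lambda$ is only slightly above $g_{\min}$. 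Quantitatively, $\lambda = g_{\min} + \fint_{\{u>0\}}(g-g_{\min})$ and $g-g_{\min}$ is supported in $[-\tfrac72,-\tfrac32]$ with $g-g_{\min}\leq g_9-g_{\min}=\tfrac67-\tfrac1{12}=\tfrac{11}{12}$; so if $|\{u>0\}|\geq 2$ then $\lambda\leq g_{\min}+\tfrac12\cdot 2\cdot\tfrac{11}{12}$, but we need the reverse. The clean route: $1-g/\lambda = 1 - g_{\min}/\lambda$ on the relevant set, and since $\lambda\geq g_{\min}\cdot\big(1+|\{g=g_9\}\cap\{u>0\}|(\tfrac{g_9}{g_{\min}}-1)/|\{u>0\}|\big)$, with $|\{g=g_9\}\cap\{u>0\}|\geq 1$ (the interval $[-3,-2]$, contained in the support by Step~1) and $|\{u>0\}|\leq 10$, one gets $\lambda/g_{\min}\geq 1+\tfrac1{10}(\tfrac{g_9}{g_{\min}}-1)=1+\tfrac1{10}(\tfrac{6/7}{1/12}-1)=1+\tfrac1{10}\big(\tfrac{72}{7}-1\big)=1+\tfrac{65}{70}=\tfrac{135}{70}=\tfrac{27}{14}$, hence $1-g/\lambda\geq 1-\tfrac{14}{27}=\tfrac{13}{27}\geq\tfrac1{11}$. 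So the bound follows once $T$ is small enough that $[-3,-2]\subset\{u(\cdot,t)>0\}$ and the support is contained in $(-5,5)$, which is the content of Step~1 and of the compact-support bound to be established afterwards.

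\textbf{Main obstacle.} The genuinely delicate point is making Steps~2 and~3 quantitatively consistent: one needs both a lower bound on $|\{u(\cdot,t)>0\}|$ (to cap $\lambda$ and get \eqref{5eq:lemu-lambda} and \eqref{5eq:lemu-rhs}) and the upper bound $|\{u(\cdot,t)>0\}|\leq 10$ (to keep the support inside $(-5,5)$), uniformly for $t\in[0,T]$, and to verify that the numerical constants $\tfrac{11}{120}$, $\tfrac12\int_{-4}^{-1}g$, $\tfrac1{11}$ actually work with the specific $g$ chosen. These are all elementary but must be checked with the right inequalities in the right direction; I would organize the argument so that Step~1 (a soft comparison argument) does all the PDE work, and Steps~2--3 are then pure arithmetic with the averaging formula \eqref{5eq:ob4}.
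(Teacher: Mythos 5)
Your overall strategy differs from the paper's. The paper writes $u_0\geq u_0^-$, invokes the comparison principle of \cite{LNRV21} for the \emph{pair} $(u,\lambda)$, obtaining both $u\geq u^-$ and $\lambda\leq\lambda^-$ simultaneously (where $(u^-,\lambda^-)$ solves the problem with initial datum $u_0^-$), and then uses the $t\to 0^+$ continuity of the support of $u^-$ and of $\lambda^-$ from \cite{LNRV23}. This gives \eqref{5eq:lemu-} and the upper bound $\lambda\leq\lambda^-\leq\tfrac32\lambda_0^-=\tfrac12\int_{-4}^{-1}g$ in one stroke; the lower bound and \eqref{5eq:lemu-rhs} are then just the arithmetic $\lambda\geq\tfrac1{10}\int_{-7/2}^{-3/2}g\geq\tfrac1{10}(\tfrac67+\tfrac1{12})\geq\tfrac{11}{120}$ and $1-\tfrac{1}{12\lambda}\geq1-\tfrac{10}{11}=\tfrac1{11}$. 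You try to bypass the comparison-of-$\lambda$ machinery and argue directly, which is a legitimate ambition, but as written your proposal has three concrete problems.

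First, in Step 2 your claimed lower bound $\lambda\geq\tfrac1{10}\big(2\cdot\tfrac67+8\cdot\tfrac1{12}\big)$ rests on the false statement that $g=g_9$ on an interval of length $2$; in fact $g=g_9=\tfrac67$ only on $[-3,-2]$, an interval of length $1$. The correct bookkeeping is $\lambda\geq\tfrac1{10}\int_{-7/2}^{-3/2}g\geq\tfrac1{10}\big(1\cdot\tfrac67+1\cdot\tfrac1{12}\big)=\tfrac{79}{840}\geq\tfrac{11}{120}$, which still works, but your stated premise is wrong and the computation should be corrected.

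Second, the upper bound on $\lambda$ is left as an acknowledged sketch, and moreover the route you indicate ($\int_{\{u>0\}}g\leq\int_{-4}^{-1}g+(\text{small})$) is not the right inequality: $\int_{\{u>0\}}g$ need not be bounded by $\int_{-4}^{-1}g$ when the positivity set expands. What actually closes the gap, using only what Step~1 gives you, is the exact identity (valid once $[-\tfrac72,-\tfrac32]\subset\{u>0\}\subset(-5,5)$ and using that $g\equiv g_{\min}$ off $(-\tfrac72,-\tfrac32)$)
\begin{equation*}
\lambda=\frac{\int_{-7/2}^{-3/2}g+g_{\min}\big(m-2\big)}{m},\qquad m:=\big|\{u(\cdot,t)>0\}\big|,
\end{equation*}
which is strictly decreasing in $m$ since $\int_{-7/2}^{-3/2}g>2g_{\min}$; evaluating at the minimum $m=2$ gives $\lambda\leq\tfrac12\int_{-7/2}^{-3/2}g<\tfrac12\int_{-4}^{-1}g$. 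This replaces your ``fiddly bookkeeping'' with a one-line monotonicity observation, and does not need the vague assertion about $\int_{\{u>0\}}g$.

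Third, Step~3 is internally confused: you dismiss $\lambda\geq\tfrac{11}{120}$ as ``not what we want — wrong direction,'' when it is in fact precisely what is needed, since on $(-5,5)\setminus[-\tfrac72,-\tfrac32]$ one has $1-\tfrac{g}{\lambda}=1-\tfrac{1}{12\lambda}\geq1-\tfrac{120}{12\cdot 11}=1-\tfrac{10}{11}=\tfrac1{11}$. Your alternative route (bounding $\lambda/g_{\min}$ via $|\{g=g_9\}\cap\{u>0\}|\geq1$ and $|\{u>0\}|\leq 10$) does reach a correct conclusion, but it is a detour driven by the earlier misreading. Once Step~2's lower bound is established, Step~3 is immediate and needs no new argument.

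Your Step~1, comparing with the solution of $\partial_t\underline u-\underline u''=-1$ on $(-4,4)$ with initial data $u_0^-$ and Dirichlet data, is a viable and indeed more elementary substitute for the paper's use of continuity of the support of $u^-$ from \cite{LNRV23}; one should just be careful to use $-1$ as the (sharp) lower bound for the right-hand side $-(1-g/\lambda)H(u)\geq -1$, and to take $u_0^-$ (not the full measure-valued $u_0$) as the initial data of $\underline u$ to preserve smoothness. With these repairs, your more self-contained route goes through, but as written Steps~2 and~3 contain genuine errors.
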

	
	\begin{proof}
		We have $u_0\geq u_0^-$ and by comparison principle $u\geq u^-$ and $\lambda\leq \lambda^-$, where $(u^-,\lambda^-)$ denotes the solution of the obstacle problem \eqref{5eq:ob1}-\eqref{5eq:ob4}, \eqref{5eq:ob4a} with initial datum $u^-(\cdot,0)=u_0^-$ in $I=(-5,5)$.
		(For the comparison principle, compare \cite[Theorem 3.1 and (3.4)]{LNRV21}.)
		
		For $u^-$ the continuity result as $t \to 0$ from \cite{LNRV23} applies and yields that the support of $u^-$ and $\lambda^-$ are continuous, which in particular implies the existence of $T>0$ such that \eqref{5eq:lemu-} holds and such that
		\begin{align*}
			\lambda(t)\leq \lambda^-(t)\leq \frac{3}{2}\lambda_0^- = \frac{3}{2}\frac{1}{3}\int_{-4}^{-1}g,
		\end{align*}
		which yields the upper estimate for $\lambda(t)$ in \eqref{5eq:lemu-lambda}.
		
		The lower estimate follows from \eqref{5eq:lemu-}, since
		\begin{align*}
			\lambda(t) \geq \frac{1}{10}\int_{-\frac{7}{2}}^{-\frac{3}{2}}g
			\geq \frac{1}{10}\Big(\frac{6}{7}+\frac{1}{12}\Big)\geq \frac{11}{120}.
		\end{align*}
		Finally, we compute in $(-5,5)\setminus [-\frac{7}{2},-\frac{3}{2}]$
		\begin{equation*}
			1-\frac{g}{\lambda} =1-\frac{1}{12\lambda}\geq 1-\frac{10}{11}=\frac{1}{11}.
		\end{equation*}
	\end{proof}

	\begin{lemma}[Finer estimates]
		\label{5lem:finer}
		For all $\eps>0$ there exists $T(\eps)>0$ such that for all $0\leq t\leq T(\eps)$
		\begin{align}
			u(\cdot,t) &>0 \quad\text{ in }[-4+\eps,-1-\eps],
			\label{5eq:lem2:pos}\\
			u(\cdot,t) &=0 \quad\text{ in }(-5,5)\setminus \big([-4-\eps,-1+\eps]\cup [-\eps,\eta+\eps]\big).
			\label{5eq:lem2:vanish}
		\end{align}
		Moreover, for all $0<t<T(\eps)$
		\begin{equation}
			|\lambda(t)-\lambda_0^-| \leq C(\eps+\eta),
			\label{5eq:lambda:diff}
		\end{equation}
		where $\lambda_0^-=\frac{1}{3}\int\limits_{-4}^{-1}g$.
	\end{lemma}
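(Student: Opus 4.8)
The plan is to sandwich $u$ between the solution $u^-$ of the obstacle problem with datum $u_0^-$ (already used in the proof of Lemma~\ref{5lem:rough}) and an explicit heat‑type supersolution, and to close the ``gaps'' in between by the non‑degeneracy Lemma~\ref{4lem:maxprinciple}. The datum $u_0^-$ satisfies the hypotheses of the continuity result of \cite{LNRV23}: $u_0^-\in C^2$ with $\{u_0^->0\}=(-4,-1)$, so \eqref{eq:nondeg2} is trivial, and $g\equiv g_{\min}$ on $\{u_0^-=0\}$ while $\lambda^-(0)=\lambda_0^-\ge\frac13 g_9=\frac27>g_{\min}$, so \eqref{eq:nondeg1} holds. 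Hence for every $\eps'>0$ there is $T_1(\eps')>0$ with $[-4+\eps',-1-\eps']\subset\{u^-(\cdot,t)>0\}\subset(-4-\eps',-1+\eps')$ for $0\le t\le T_1(\eps')$; by the comparison principle \cite[Theorem~3.1 and (3.4)]{LNRV21} one has $u\ge u^-\ge0$, which is already \eqref{5eq:lem2:pos}. From Lemma~\ref{5lem:rough} we also retain $\tfrac{11}{120}\le\lambda\le\lambda^-$ and $1-g/\lambda\ge\tfrac1{11}$ on $(-5,5)\setminus[-\tfrac72,-\tfrac32]$.

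\emph{Proof of \eqref{5eq:lem2:vanish}.} Fix $\eps\in(0,\tfrac12)$, put $\eps':=\eps/4$ and work on $[0,T_1(\eps')]$, so that $u^-\equiv0$ on the three ``gap'' intervals $J_1:=(-5,-4-\eps)$, $J_2:=(-1+\eps,-\eps)$, $J_3:=(\eta+\eps,5)$, which together form $(-5,5)\setminus([-4-\eps,-1+\eps]\cup[-\eps,\eta+\eps])$; each $J_i$ lies in $(-5,5)\setminus[-\tfrac72,-\tfrac32]$ and at distance $\ge\tfrac34\eps$ both from $\{u^-(\cdot,t)>0\}$ and from $\mathrm{supp}\,u_0^+\subset[0,\eta]$. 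For $v:=u-u^-\ge0$ a short computation gives, a.e., $\partial_t v-v''=g\big(\tfrac1\lambda-\tfrac1{\lambda^-}\big)\le g_9\cdot\tfrac{120}{11}=:C_0$ on $\{u^->0\}$ (using $H(u)=H(u^-)=1$ there and $\lambda\ge\tfrac{11}{120}$), and $\partial_t v-v''=-(1-g/\lambda)H(v)\le0$ on $\{u^-=0\}=\{v=u\}$; moreover $v(\cdot,0)=u_0^+$. Comparing with the solution $w$ of $\partial_t w-w''=C_0\chi_{\{u^->0\}}$, $w(\cdot,0)=u_0^+$, Neumann data at $\pm5$ — the measure datum treated as in Proposition~\ref{P.solproperties} — gives $v\le w=\Phi*u_0^++\phi$, with $\phi$ the contribution of the source. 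Since on each $J_i$ the sources $\{u^->0\}$ and $\mathrm{supp}\,u_0^+$ stay at distance $\ge\tfrac34\eps$, the (reflected) Neumann heat‑kernel estimates yield $v(x,t)\le C(\eps)e^{-c\eps^2/t}$ on $J_i$, hence $v\le\tfrac1{33}\varrho^2$ on $B(x,\varrho)\times(t-\varrho^2,t)$ once $\varrho=\tfrac12\sqrt t$ and $t\le T(\eps)$ is small. As $u_0\equiv0$ and $1-g/\lambda\ge\tfrac1{11}$ on $J_i$, Lemma~\ref{4lem:maxprinciple} with $\theta=\tfrac1{11}$ (applied to the even reflection of $u$ across $x=\pm5$ for $J_1,J_3$) forces $u(\cdot,t)=v(\cdot,t)=0$ on $J_i$ for $0\le t\le T(\eps)$, which is \eqref{5eq:lem2:vanish}.

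\emph{Proof of \eqref{5eq:lambda:diff}.} For $0<t<T(\eps)$, the above give $\{u(\cdot,t)>0\}=E_L\cup E_R$ with $[-4+\eps,-1-\eps]\subset E_L\subset[-4-\eps,-1+\eps]$, $E_R\subset[-\eps,\eta+\eps]$, and $g\equiv g_{\min}$ on $[-\eps,\eta+\eps]$. Hence $\big||E_L|-3\big|$ and $\big|\int_{E_L}g-\int_{-4}^{-1}g\big|$ are $O(\eps)$, while $|E_R|+\int_{E_R}g=O(\eps+\eta)$, so $\lambda(t)=\fint_{\{u(\cdot,t)>0\}}g=\dfrac{\int_{-4}^{-1}g+O(\eps+\eta)}{3+O(\eps+\eta)}=\lambda_0^-+O(\eps+\eta)$.

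\emph{Main obstacle.} The crux is the vanishing step: establishing the equation for $v=u-u^-$ across the free boundary of $u^-$ in a rigorous weak sense, passing the comparison through the singular datum $u_0^+$ via Proposition~\ref{P.solproperties}, and — above all — applying Lemma~\ref{4lem:maxprinciple} on the gaps, where the admissible radius must be of order $\sqrt t$; this works only because the choice $\eps'=\eps/4$ keeps each gap a fixed distance $\sim\eps$ from $\{u^->0\}$ and $\mathrm{supp}\,u_0^+$, so the Gaussian factor $e^{-c\eps^2/t}$ beats the threshold $\tfrac\theta3\varrho^2\sim t$ as $t\to0$, and because $g$ is locally constant near $\pm5$, allowing the Neumann boundary to be removed by even reflection.
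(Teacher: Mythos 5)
Your proof is correct in substance, but it takes a more refined route than the paper's. The paper compares $u$ directly with the global supersolution $v(\cdot,t)=S(t)u_0+10t$ (where $S(t)$ is the Neumann heat semigroup), using only the crude bound $-1+g/\lambda\le 10$ that follows from Lemma~\ref{5lem:rough}; it then invokes upper heat‑kernel bounds to get $u\le\omega_\eps(t)$ away from $\mathrm{supp}\,u_0$ and applies Lemma~\ref{4lem:maxprinciple}. You instead work with the nonnegative difference $v=u-u^-$, observe that $\partial_t v - v''$ is bounded by a source $C_0\chi_{\{u^->0\}}$ localized near $[-4,-1]$, and compare $v$ with the corresponding Duhamel solution $w$ with datum $u_0^+$; since both the source and the datum stay a distance $\gtrsim\eps$ from the gaps $J_i$, this gives the genuinely Gaussian bound $u=v\le C(\eps)e^{-c\eps^2/t}$ there. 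This difference is not cosmetic: Lemma~\ref{4lem:maxprinciple} requires $u\le\tfrac{\theta}{3}\varrho^2$ with $\varrho<\sqrt{t_0}$, so the admissible threshold is $O(t_0)$ with a small constant, and the paper's $10t$ term in $\omega_\eps$ does not on its own satisfy that threshold; one needs an additional step (for instance, a Dirichlet heat‑kernel estimate inside a slightly shrunken gap, using $\partial_t u-u''\le 0$ there and the small boundary data coming from the first supersolution) to upgrade $\omega_\eps$ to something decaying faster than $t$. Your decomposition achieves that upgrade directly. Two small points you should tidy up: (i) the inequality $\partial_t v - v''\le 0$ on $\{u^-=0\}$ uses that $1-g/\lambda\ge 0$ there, which holds because $\{u^-(\cdot,t)=0\}\subset(-5,5)\setminus[-\tfrac72,-\tfrac32]$ for small $t$ together with \eqref{5eq:lemu-rhs}; make this explicit since the obstacle inequality $g\le\lambda$ is only given on $\{u=0\}$, not on the larger set $\{u^-=0\}$. (ii) To apply Lemma~\ref{4lem:maxprinciple} at $x_0$ in the $\eps$‑gap you need the smallness of $u$ on $B(x_0,\varrho)$, i.e.\ on a slightly larger ($\tfrac{3\eps}{4}$‑type) gap; your choice $\eps'=\eps/4$ already provides enough margin, but the statement of the bound should be phrased on that larger set.
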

	
	\begin{proof}
		The estimate \eqref{5eq:lem2:pos} follows as in Lemma \ref{5lem:rough} by comparison with $u^-$ and the continuity of the support of $u^-$.
		
		Next we let $(S(t))_{t>0}$ denote the heat semigroup associated to the Neumann problem on $(-5,5)$ and define
		\begin{equation*}
			v(\cdot,t) = S(t)u_0 + 10t.
		\end{equation*}
		We deduce from \eqref{5eq:lemu-lambda}
		\begin{equation*}
			\partial_t u - u'' = -1+ \frac{g}{\lambda} \leq -1 + \frac{6}{7}\cdot \frac{120}{11} 
			\leq 10 = \partial_t v -v''
		\end{equation*}
		and by the maximum principle $u\leq v$.
		By upper heat kernel bounds we deduce that $v\to 0$ uniformly away from the support of $u_0$, in particular
		\begin{equation*}
			u(x,t) \leq \omega_\eps(t) \quad\text{ for all }x\in (-5,5)\setminus \big([-4-\eps/2,-1+\eps/2]\cup [-\eps/2,\eta+\eps/2]\big),
		\end{equation*}
		where $\omega_\eps$ depends only on $\eps>0$ and $\int_{-5}^5 u_0$ and satisfies $\omega_\eps(t)\to 0$ as $t\downarrow 0$.
		
		We then can apply Lemma \ref{4lem:maxprinciple} and deduce \eqref{5eq:lem2:vanish}.
		
		The estimate \eqref{5eq:lambda:diff} follows from \eqref{5eq:lem2:pos} and \eqref{5eq:lem2:vanish}.
	\end{proof}
	
	We next define $u_r,u_\ell:\R\times [0,T]\to\R^+_0$ by
	\begin{equation*}
		u_r(x,t)=
		\begin{cases}
			u(x,t) \quad&\text{ if }-\frac{1}{2}<x<5,\\
			0 &\text{ else,}
		\end{cases}
	\end{equation*}
	and
	\begin{equation*}
		u_\ell(x,t)=
		\begin{cases}
			u(x,t) \quad&\text{ if }-5<x<-\frac{1}{2},\\
			0 &\text{ else.}
		\end{cases}
	\end{equation*}
	
	Then $u_r$ solves
	\begin{align}
		\partial_tu_r - u_r'' &= -fH(u_r)\quad\text{ in }\R,
		\label{5eq:tilde-u:1r}\\
		u_r(\cdot,0) &= u_0^+,
		\label{5eq:tilde-u:2r}
	\end{align}
	where $f(t)=1-\frac{g_{\min}}{\lambda(t)}$ and $u_\ell$ solves
	\begin{align}
		\partial_tu_\ell - u_\ell'' &= -f_\ell H(u_\ell)\quad\text{ in }\R,
		\label{5eq:tilde-u:1l}\\
		u_\ell(\cdot,0) &= u_0^-,
		\label{5eq:tilde-u:2l}
	\end{align}
	with $f_\ell(x,t)=1-\frac{g(x)}{\lambda(t)}$.
	
	By Lemma \ref{5lem:finer} we deduce that $f$ is nearly constant, more precisely 
	\begin{equation}
		|f(t)-f^+| \leq C(\eps+\eta) \quad\text{ for all }0<t<T(\eps)
		\label{5eq:ff-}
	\end{equation}
	with $f^+:= 1-\frac{g_{\min}}{\lambda_0^-}$.
	
	\begin{proof}[Proof of Theorem \ref{5thm:main}]
		So far we have chosen $u_0^-$.
		We will now define $u_0^+$ and thus also $u_0$. 
		Consider $\lambda_0:=f^+$ in Theorem \ref{4thm:main} and let $\kappa>0$ be chosen as in that theorem.
		By \eqref{5eq:ff-} we can next fix $\eta>0$, $\eps>0$ such that $|f(t)-\lambda_0|<\kappa$ and an initial datum $u_0^+$ as provided by Theorem \ref{4thm:main}.
		
		Now consider $\lambda=\lambda_0+\kappa$, $\mu=\lambda_0-\kappa$ and the solutions $u_\lambda$, $u_\kappa$ of \eqref{4eq:obst1-lambda}, \eqref{5eq:obst3-lambda}.
		By comparison principle the solution $u_r$ of \eqref{5eq:tilde-u:1r}, \eqref{5eq:tilde-u:2r} satisfies $u_\mu\leq u_r\leq u_\lambda$.
		
		By Theorem \ref{4thm:main} we conclude
		\begin{align*}
			\liminf \limits_{t\to 0^+} |\{u_r(\cdot,t)>0\}|
			&\leq \liminf \limits_{t\to 0^+} |\{u_\lambda(\cdot,t)>0\}|\\
			&< \limsup\limits_{t\to 0^+} |\{u_\mu(\cdot,t)>0\}|
			\leq \limsup\limits_{t\to 0^+} |\{u_r(\cdot,t)>0\}|.
		\end{align*}
		
		On the other hand we deduce from \eqref{5eq:lemu-rhs} that $f_\ell\leq -\frac{1}{11}$ in $\R\setminus \big[-\frac{7}{2},-\frac{3}{2}\big]$ and hence in a neighborhood of $\partial\{u_0^->0\}$.
		By the results in \cite{BF76} this is sufficient to conclude that 
		\begin{equation*}
			\lim\limits_{t\to 0^+} |\{u_\ell(\cdot,t)>0\}|
			= |\{u_0^- >0\}|=3.
		\end{equation*}
		Since
		\begin{equation*}
			u(\cdot,t)=
			\begin{cases}
				u_\ell(\cdot,t) \quad&\text{ on }\big(-5,-\frac{1}{2}\big)\,,\\
				u_r(\cdot,t)  \quad&\text{ on }\big(\frac{1}{2},5\big)\,,
			\end{cases}
		\end{equation*}
		we deduce \eqref{5eq:oscillation}.
		
		Finally, we have with $A_\ell(t)=\{u_\ell(\cdot,t)>0\}$ and $A_r(t)=\{u_\ell(\cdot,t)>0\}$
		\begin{align*}
			\lambda(t) &= \frac{1}{\big|A_\ell(t) \cup A_r(t)\big|}\Big(\int_{A_\ell(t)} g +  \int_{A_r(t)}g\Big)\\
			&= g_{\min} + \frac{1}{|A_\ell(t)|+|A_r(t)|}\Big(\int_{A_\ell(t)}(g-g_{\min})+(g_{\max}-g_{\min})|A_r(t)|\Big)
		\end{align*}
		We have $A_\ell(t)\to A_\ell(0)$ and
		\begin{equation*}
			a_r^- := \liminf_{t\downarrow 0}|A_r(t)|\,<\, \limsup_{t\downarrow 0}|A_r(t)|=:a_r^+\,.
		\end{equation*}
		This gives
		\begin{align*}
			&\limsup_{t\downarrow 0}\lambda(t)-\liminf_{t\downarrow 0}\lambda(t)\\
			&\qquad \geq \frac{1}{|A_\ell(0)|+a_r^+}\Big(\int_{A_\ell(0)}(g-g_{\min})+(g_{\max}-g_{\min})a_r^+\Big)\\
			&\qquad\qquad -\frac{1}{|A_\ell(0)|+a_r^-}\Big(\int_{A_\ell(0)}(g-g_{\min})+(g_{\max}-g_{\min})a_r^-\Big)\\
			&\qquad=\frac{\int_{A_\ell(0)}(g_{\max}-g)}{(|A_\ell(0)|+a_r^+)(|A_\ell(0)|+a_r^-)}(a_r^+-a_r^-)\,>\,0\,.
		\end{align*}
	\end{proof}
	
	\bigskip 
	{\bf Acknowledgments:} The authors gratefully acknowledge the financial support of  the Bonn International Graduate School of Mathematics at the Hausdorff Center for Mathematics (EXC 2047/1, Project-ID 390685813) funded through the Deutsche Forschungsgemeinschaft (DFG, German Research Foundation).

	\bibliographystyle{plainurl}
	\bibliography{\jobname}
	
\end{document}